\documentclass{amsart}
\usepackage{tikz}

\usepackage[a4paper,margin=1.25in]{geometry}
\usetikzlibrary{cd}
\usepackage[utf8]{inputenc}
\usepackage[
    bookmarks=true,
    bookmarksnumbered=true,
    bookmarkstype=toc,hidelinks]{hyperref}

\usepackage{amsthm}
\usepackage{amsmath}
\usepackage{amssymb}
\usepackage{amsfonts}
\usepackage{mleftright}
\usepackage{mathrsfs}
\usepackage{physics}
\DeclareMathOperator{\Aut}{Aut}
\DeclareMathOperator{\Gal}{Gal}

\DeclareMathOperator{\Mod}{Mod}

\DeclareMathOperator{\id}{id}
\DeclareMathOperator{\GL}{GL}

\DeclareMathOperator{\ab}{ab}

\DeclareMathOperator{\Out}{Out}

\DeclareMathOperator{\rec}{rec}

\theoremstyle{definition}
\newtheorem{dfn}{Definition}
\newtheorem*{dfn*}{Definition}
\newtheorem{rem}[dfn]{Remark}
\theoremstyle{plain}
\newtheorem{thm}[dfn]{Theorem}

\newtheorem{thma}{Theorem}

\newtheorem*{thm*}{Theorem}
\newtheorem{prop}[dfn]{Proposition}
\newtheorem*{prop*}{Proposition}

\newtheorem{lem}[dfn]{Lemma}
\newtheorem*{lem*}{Lemma}
\newtheorem*{rem*}{Remark}
\newtheorem*{ques*}{Question}

\newtheorem*{claim*}{Claim}
\newcommand{\Q}{\mathbb{Q}}
\newcommand{\Z}{\mathbb{Z}}

\title[Lubin-Tate representations are not Aut-intrinsically Hodge-Tate]{Lubin-Tate representations over nontrivial finite Galois extensions of $\Q_{p}$ are not Aut-intrinsically Hodge-Tate}
\author{Kaiji Kondo}
\date{}

\subjclass[2020]{11S20, 11S31, 11F80} 
\keywords{anabelian geometry, absolute Galois groups, mixed-characteristic local fields, Lubin-Tate characters, $p$-adic Hodge theory, Aut-intrinsically Hodge-Tate-ness}

\address{Research Institute for Mathematical Sciences, Kyoto University, Kyoto 606-8502, JAPAN}
\email{kkondo@kurims.kyoto-u.ac.jp}

\begin{document}

\begin{abstract}
    In the present paper, we show that, for an odd prime number $p$ and a nontrivial finite Galois extension $k$ of $\Q_{p}$, the $p$-adic representation of the absolute Galois group of $k$ determined by a Lubin-Tate formal group over the ring of integers of $k$ is not Aut-intrinsically Hodge-Tate [in the sense of Hoshi].
    This settles the odd-degree cases left open in the previous works of Hoshi and the author and, together with the known even-degree case, completes the picture for finite Galois extensions of $\Q_{p}$ in the case where $p$ is odd.
    This exhibits a sharp contrast, from the viewpoint of anabelian geometry, between the $p$-adic cyclotomic character and other $p$-adic Lubin-Tate characters.
\end{abstract}

\maketitle

\section*{Introduction}
Let $p$ be a prime number, $k$ a finite extension of $\Q_{p}$, and $\overline{k}$ an algebraic closure of $k$.
We write $G_{k}\stackrel{\mathrm{def}}{=} \Gal(\overline{k}/k)$ for the absolute Galois group of $k$ determined by the algebraic closure $\overline{k}$.
In anabelian geometry, it is natural to discuss conditions for a continuous automorphism of $G_{k}$ to be induced by a field automorphism of $k$ [cf., e.g., \cite{8}, \cite{13}].
The following theorem studies such conditions from the perspective of $p$-adic Hodge theory:

\begin{thma}[\cite{8}, Corollary 3.4]
    Let $\alpha \colon G_{k} \stackrel{\sim}{\longrightarrow} G_{k}$ be a continuous automorphism of $G_{k}$.
Then the following conditions are equivalent:
\begin{enumerate}
    \item [(1)] The automorphism $\alpha$ is induced by a field automorphism of $k$.
    \item [(2)] For every finite-dimensional continuous representation $\rho \colon G_{k} \to \GL_{n}(\Q_{p})$ of $G_{k}$ that is Hodge-Tate, the composite $G_{k} \stackrel{\alpha}{\to} G_{k} \stackrel{\rho}{\to} \GL_{n}(\Q_{p})$ is Hodge-Tate.
    \item [(3)] The automorphism $\alpha$ is  HT-qLT-type [cf. \cite{8}, Definition 1.3, $\rm(\hspace{.08em}ii\hspace{.08em})$].
\end{enumerate}
 
\end{thma}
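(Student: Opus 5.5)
This is Hoshi's theorem, and I would prove it by the cycle of implications $(1)\Rightarrow(2)\Rightarrow(3)\Rightarrow(1)$.

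\emph{Step $(1)\Rightarrow(2)$.} Suppose $\alpha$ is induced by $\sigma\in\Aut(k)$. Extend $\sigma$ to an automorphism $\tilde\sigma$ of $\overline{k}$; then $\alpha$ is conjugation by $\tilde\sigma$. Extend $\tilde\sigma$ further, by continuity, to a continuous automorphism of $\mathbb{C}_{p}$. Let $V$ be Hodge-Tate. Then $V\otimes_{\Q_p}\mathbb{C}_p\cong\bigoplus\mathbb{C}_p(i)^{n_i}$. Twisting this isomorphism by $\tilde\sigma$ (semilinearly) gives the Hodge-Tate decomposition for $\rho\circ\alpha$. Here one uses that the cyclotomic character satisfies $\chi\circ\alpha=\chi$, since $\tilde\sigma$ acts on $\mu_{p^\infty}$ compatibly with the Galois action. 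This step is routine.

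\emph{Step $(2)\Rightarrow(3)$.} By the definition in \cite{8}, HT-qLT-type means that $\alpha$ carries Lubin-Tate-type (quasi-Lubin-Tate) characters, which are Hodge-Tate, to Hodge-Tate characters, or an equivalent condition on them. This should follow directly by applying (2) to the one-dimensional representations, after restriction of scalars from the coefficient field to $\Q_p$. I would verify this against Definition 1.3 of \cite{8}.

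\emph{Step $(3)\Rightarrow(1)$.} This is the main obstacle. The plan has four parts.

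\begin{enumerate}
\item Via local class field theory, $\alpha$ induces an automorphism $\alpha^{\ab}$ of $G_k^{\ab}\cong\widehat{k^\times}$. This automorphism preserves the inertia subgroup $\mathcal{O}_k^\times$.
\item For a Lubin-Tate character $\chi_{LT}\colon G_k\to\mathcal{O}_k^\times$, restricted to inertia it is $u\mapsto u^{-1}$ via $\rec$. Its Hodge-Tate weights are concentrated at a single embedding. For each embedding $\tau\colon k\hookrightarrow\overline{\Q}_p$, the character $\tau\circ\chi_{LT}\circ\alpha$ is Hodge-Tate.
\item By Sen/Tate theory, an abelian character that is Hodge-Tate agrees on an open subgroup of inertia with an algebraic character $\prod_\tau \tau^{n_\tau}$. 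Applying this to all embeddings, one deduces that $\alpha^{\ab}|_{\mathcal{O}_k^\times}$ coincides on an open subgroup with a map of the form $u\mapsto\sigma(u)$ for some field embedding $\sigma$. Integrality together with the bijectivity of $\alpha$ forces the exponents to be a single $1$ at one embedding, so $\sigma\in\Aut(k)$.
\item Conclude using the anabelian result of Mochizuki (the $p$-adic Grothendieck conjecture for local fields with ramification filtration, or Hoshi's variant): an automorphism of $G_k$ compatible with the multiplicative structure on units in this sense, and hence preserving the ramification filtration, is induced by a field automorphism. Concretely, this step needs a known criterion, for example "$\alpha$ preserves the upper ramification filtration $\Rightarrow$ geometric."
\end{enumerate}

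The delicate points are the reduction to the compatibility condition required by that criterion, and the passage from "agrees on an open subgroup" to the full statement, which must be done over all finite extensions simultaneously.
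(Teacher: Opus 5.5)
The paper does not prove Theorem A; it quotes it from \cite{8}, Corollary~3.4, so your argument has to stand on its own. Your $(1)\Rightarrow(2)$ is fine, but $(3)\Rightarrow(1)$ breaks at Step~3. Hodge--Tate-ness of $\chi_{LT}\circ\alpha$, combined with the Tate--Serre classification, only yields $\alpha_{+}=\sum_{\tau}n_{\tau}\tau$ with $n_{\tau}\in\Z$; for $k/\Q_{p}$ Galois this is exactly Proposition~\ref{key lemma}. Integrality plus bijectivity do \emph{not} force a single exponent $1$. Take $k/\Q_{p}$ Galois and $\id\neq\tau\in\Gal(k/\Q_{p})$, and set $\lambda=2\cdot\id-\tau$:
its eigenvalues are $2-\zeta\neq0$ with $\zeta$ a root of unity, so $\lambda$ is a $\Q_{p}$-linear automorphism of $k$;
the map $u\mapsto u^{2}\tau(u)^{-1}$ is an injective homomorphism with open image on $U_{k}^{(n)}$ for $n\gg0$;
and $\mathrm{Tr}_{k/\Q_{p}}\circ\lambda=\mathrm{Tr}_{k/\Q_{p}}$, so it is even compatible with preservation of the cyclotomic character (which forces $N_{k/\Q_{p}}\circ\alpha^{\times}=N_{k/\Q_{p}}$ on units).
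Nothing in your Steps 1--3 excludes this. The gap between ``$\alpha_{+}\in\Z[\Gal(k/\Q_{p})]$'' and ``$\alpha_{+}\in\Gal(k/\Q_{p})$'' is precisely what makes the question studied in this paper nontrivial.

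More fundamentally, no argument that uses only the Lubin--Tate character of $k$ itself can give $(3)\Rightarrow(1)$. For $k=\Q_{p}$ that character is the cyclotomic character, which every automorphism of $G_{\Q_{p}}$ preserves. So $\alpha^{\times}=\id$ on $\Z_{p}^{\times}$ for \emph{every} $\alpha$, your Step~3 always returns $\sigma=\id$, and Step~4 would make every automorphism of $G_{\Q_{p}}$ inner, which is false. Similarly, if (3) concerned $\rho_{k,\pi}$ alone, Theorem A together with Lemma~\ref{dimention of the image of Phi} would make Theorem~\ref{main theorem} immediate, since geometric automorphisms have finite image under $\Phi$. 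Hence the HT-qLT condition must concern (quasi-)Lubin--Tate characters $\chi_{H}$ of all open subgroups $H\subseteq G_{k}$, and both remaining steps change.

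For $(2)\Rightarrow(3)$, apply (2) to the Hodge--Tate representation $\mathrm{Ind}_{H}^{G_{k}}\chi_{H}$, and note that $\chi_{H}\circ\alpha$ is a direct summand of the restriction of $(\mathrm{Ind}_{H}^{G_{k}}\chi_{H})\circ\alpha\cong\mathrm{Ind}_{\alpha^{-1}(H)}^{G_{k}}(\chi_{H}\circ\alpha)$ to $\alpha^{-1}(H)$.

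For $(3)\Rightarrow(1)$, the Tate--Serre step has to be run at every level $L$. The ``single embedding with exponent $1$'' conclusion must then come from the interaction between levels: inclusions $k^{\times}\subset L^{\times}$ versus transfer, norms versus restriction, and equivariance for the conjugation action of $G_{k}/G_{L}$. After that, the resulting field isomorphisms must be glued into an $\alpha$-equivariant automorphism of $\overline{k}$. Your Step~4 leaves exactly this as a black box. Its ``hence preserving the ramification filtration'' is also unjustified: agreement of $\alpha^{\times}$ with a field isomorphism on a small open subgroup of units says nothing about the unit groups $U^{(n)}$ of small index, which govern the small ramification breaks.
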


It follows from this theorem and \cite{8}, Definition 1.3, $\rm(\hspace{.08em}ii\hspace{.08em})$, that Lubin-Tate characters play an important role in the study of the \textit{geometricity} [i.e., the condition to be induced by a field automorphism] of continuous automorphisms of $G_{k}$.

Moreover, motivated by this theorem, Hoshi defined the following notion for continuous $p$-adic representations.

\begin{dfn*}[\cite{11}, Definition 1.3]
    Let $V$ be a $\Q_{p}$-vector space of finite dimension and $\rho \colon G_{k} \to \Aut_{\Q_{p}}(V)$ a continuous representation.
    Then we shall say that $\rho$ is \textit{Aut-intrinsically Hodge-Tate} if, for an arbitrary continuous automorphism $\alpha$ of $G_{k}$, the composite $\rho \circ \alpha \colon G_{k} \to \Aut_{\Q_{p}}(V)$ is Hodge-Tate.
\end{dfn*}

Let $\pi \in \mathcal{O}_k$ be a uniformizer of the ring of integers $\mathcal{O}_k$ of $k$.
In the remainder of the present introduction, we write $\rho_{k,\pi} \colon G_{k} \to \Aut_{\Q_{p}}(k_{+})$ for the continuous $p$-adic representation obtained by forming the composite
        \begin{align*}
            G_{k} \stackrel{\chi_{k,\pi}}{\to} \mathcal{O}_{k}^{\times} \hookrightarrow \mathrm{Aut}_{\Q_{p}}(k_{+}),
        \end{align*}
        where the first arrow is the Lubin-Tate character $\chi_{k,\pi} \colon G_{k} \to \mathcal{O}_{k}^{\times}$ [i.e., the continuous character determined by a Lubin-Tate formal group law over $\mathcal{O}_{k}$ associated to $\pi$], and the second arrow is the natural inclusion.

It is natural to study which representations are Aut-intrinsically Hodge-Tate.
The following theorem is a typical example motivated by this question:

\begin{thma}[\cite{11}, Theorem 3.3; \cite{7}, Theorem 1.9; \cite{14}, Theorem 4.4]
    Let $\rho \colon G_k \to \Aut_{\Q_{p}}(V)$ be a continuous $p$-adic representation.
    Then the following assertions hold:
    \begin{enumerate}
        \item [(1)] Suppose either that $\rho$ is one-dimensional or that $\rho$ is two-dimensional and reducible.
        Then $\rho$ is Hodge-Tate if and only if $\rho$ is Aut-intrinsically Hodge-Tate.
        \item [(2)] Suppose that $p$ is an odd prime number and that $k/\Q_{p}$ is a finite Galois extension of even degree.
        Let $\pi \in \mathcal{O}_{k}$ be a uniformizer of $\mathcal{O}_{k}$.
        If $\rho$ is isomorphic to the continuous $p$-adic representation $\rho_{k,\pi}$ of $G_{k}$, then $\rho$ is not Aut-intrinsically Hodge-Tate.
        \item [(3)] Suppose that $p=2$ and that $k$ contains a primitive $4$-th root of unity.
        Suppose, moreover, that $k/\mathbb{Q}_p$ is an abelian extension.
        Let $\pi \in \mathcal{O}_{k}$ be a uniformizer of $\mathcal{O}_{k}$.
        If $\rho$ is isomorphic to the continuous $p$-adic representation $\rho_{k,\pi}$ of $G_{k}$, then $\rho$ is not Aut-intrinsically Hodge-Tate.
    \end{enumerate}
\end{thma}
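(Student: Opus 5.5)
Since this theorem collects results from \cite{11}, \cite{7} and \cite{14}, I would prove its three parts separately: part (1) by a classification argument, parts (2) and (3) by constructing an explicit automorphism.

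\emph{Part (1), one-dimensional case.} The first step is to classify the Hodge-Tate characters $\chi \colon G_k \to \Q_p^\times$. By Sen--Serre--Tate, such a character is locally algebraic, so on an open subgroup of inertia it has the form $\prod_{\sigma} \sigma\circ\chi_{k,\pi}^{n_\sigma}$. Because $\chi$ takes values in $\Q_p$, the exponents $n_\sigma$ must all be equal to a single integer $n$. Via local class field theory, $N_{k/\Q_p}\circ\chi_{k,\pi}$ equals the cyclotomic character $\chi_{\mathrm{cyc}}$ up to an unramified twist. Hence $\chi$ is Hodge-Tate if and only if $\chi\chi_{\mathrm{cyc}}^{-n}$ has finite image on inertia for some $n\in\Z$, i.e.\ is potentially unramified.

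The second step is to check that every continuous automorphism $\alpha$ of $G_k$ preserves the three ingredients of this characterization:
\begin{itemize}
\item the inertia subgroup (group-theoretically characterized, by Mochizuki);
\item the cyclotomic character (reconstructible from $G_k$ as the action on $\mu_{p^\infty}$, via the reconstruction of $k^\times$ and local duality);
\item finiteness of image.
\end{itemize}
It follows that $\chi\circ\alpha$ is again of the same form, hence Hodge-Tate.

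\emph{Part (1), two-dimensional reducible case.} The representation $\rho$ is an extension of $\chi_2$ by $\chi_1$, where both characters are Hodge-Tate; by the one-dimensional case, so are $\chi_i\circ\alpha$. I would split by Hodge-Tate weights.
\begin{itemize}
\item If the weights differ, the Sen operator has distinct integer eigenvalues, so it is semisimple and the extension is automatically Hodge-Tate. This remains true after composing with $\alpha$, since the weights of $\chi_i \circ \alpha$ are again distinct: $\alpha$ preserves $\chi_{\mathrm{cyc}}$.
\item If the weights agree, I would twist so that the class lies in $H^1(G_k,\Q_p(\eta))$ with $\eta$ potentially unramified. After passing to a finite extension $k'/k$ (which $\alpha$ permutes compatibly, the relevant open subgroups being characteristic up to finite index), the class becomes a homomorphism $G_{k'}\to\Q_p$. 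It is Hodge-Tate exactly when the Sen operator vanishes, i.e.\ when the homomorphism kills an open subgroup of inertia.
\end{itemize}
This last condition is again preserved by $\alpha$, which completes part (1).

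\emph{Parts (2) and (3).} Here I need a non-geometric $\alpha$ with $\chi_{k,\pi}\circ\alpha$ not locally algebraic. Through the reciprocity map, $\alpha^{\ab}$ induces an automorphism of $\widehat{k^\times}$ preserving $\mathcal{O}_k^\times$. The plan is to produce $\alpha$ whose induced automorphism on the principal units $U^{(1)}\subset\mathcal{O}_k^\times$ is a $\Z_p$-linear automorphism. It should \emph{not} agree, on any open subgroup, with a $\Q_p$-linear combination of the embeddings $\sigma$ composed with field automorphisms. Then $\chi_{k,\pi}\circ\alpha$ fails local algebraicity, so $\rho_{k,\pi}\circ\alpha$ is not Hodge-Tate.

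To build such an $\alpha$ when $p$ is odd, I would use the Jannsen--Wingberg presentation of $G_k$. It allows lifting automorphisms of the relevant tame and $p$-adic data which respect the generator relations. Using $\Gal(k/\Q_p)$ of even order, one can twist the $\Gal(k/\Q_p)$-module structure on $U^{(1)}\otimes\Q_p\cong \Q_p[\Gal(k/\Q_p)]$ by an element of order $2$ (e.g.\ by an appropriate sign on an isotypic component). For $p=2$ with $\sqrt{-1}\in k$ and $k/\Q_2$ abelian, the analogous Demu\v{s}kin-type presentation serves the same purpose.

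\emph{Main obstacle.} I expect the hardest step to be this lifting: showing that the prescribed automorphism of the abelianization (equivalently, of $\widehat{k^\times}$) really comes from an automorphism of the full group $G_k$. This is where the parity hypothesis and the $\mu_4\subset k$ hypothesis should enter, since compatibility with the Jannsen--Wingberg relation, including the cyclotomic twist, must be verified.
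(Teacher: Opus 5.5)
Your part (1) is the standard argument and is sound in substance. Parts (2) and (3), however, contain a genuine gap: no automorphism is actually produced, and the one concrete suggestion is of the wrong kind. That suggestion is an involution acting by a sign on an isotypic component of $k_{+}\cong\Q_p[\Gal(k/\Q_p)]$. Such a map commutes with the Galois action, so for abelian $k$ it already lies in $\Q_p[\Gal(k/\Q_p)]$, where Proposition \ref{key lemma} can only detect it through non-integrality of coefficients. Moreover, every $\alpha_{+}$ satisfies $\mathrm{Tr}_{k/\Q_p}\circ\alpha_{+}=\mathrm{Tr}_{k/\Q_p}$. This is because $\chi_{\mathrm{cyc}}\circ\alpha=\chi_{\mathrm{cyc}}$ and the cyclotomic character corresponds under $\rec_k$ to (the inverse of) the norm on $\mathcal{O}_k^{\times}$. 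Hence the sign on the trivial component is forced to be $+1$. For $k/\Q_p$ quadratic the only candidates left are $\id$ and the nontrivial $c\in\Gal(k/\Q_p)$. If $\alpha_{+}=c$, then $\alpha^{\times}$ agrees with $c$ on an open subgroup of $\mathcal{O}_k^{\times}$, so $\rho_{k,\pi}\circ\alpha$ \emph{is} Hodge-Tate by Theorem \ref{classification of abelian Hodge-Tate rep}. Independently of this, you have no mechanism for realizing a map prescribed in Galois-module terms by an element of $\Aut(G_k)$. The Jannsen--Wingberg presentation only lets you write down automorphisms in terms of the generators $x_1,\ldots,x_d$, and the induced basis $y_1,\ldots,y_d$ of $k_{+}$ has no known relation to the Galois action.

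This also means you have misplaced the main obstacle, since lifting is the easy direction. For even $d_k$, take $\varphi(x_2)=x_2x_1$ with all other generators fixed, as in the paper's final remark. It respects the defining relations: with $[a,b]=aba^{-1}b^{-1}$ one has $[x_1,x_2x_1]=[x_1,x_2]$. It is also surjective, hence an automorphism of the topologically finitely generated group $G_k$, and $\varphi_{+}$ is the transvection $y_2\mapsto y_2+y_1$. The real difficulty is to show that such an $\alpha_{+}$ is not of the form $\sum_{\sigma}i_{\sigma}\sigma$, even though the basis $y_i$ is inexplicit. The cited works do this for specific explicit $\varphi$. The present paper instead shows $\mathrm{Sp}_{2g}(\Z_p)\subset\mathrm{Im}(\Phi)$ via mapping class groups (Lemma \ref{dimention of the image of Phi}) and compares this with $\dim Z\le d_k$, or with the commutativity of $Z$ when $g=1$. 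For abelian $k$ a basis-free invariant also works: elements of $\Q_p[\Gal(k/\Q_p)]$ act semisimply on $k_{+}$, whereas a nontrivial transvection does not.

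Finally, part (1) needs a small repair in the equal-weight case. A potentially unramified $\eta$ may have infinite image on Frobenius, so the class need not become a homomorphism $G_{k'}\to\Q_p$ after any finite extension. Use instead Sen's theorem: $\rho\otimes\chi_{\mathrm{cyc}}^{-n}$, whose weights are all $0$, is Hodge-Tate if and only if inertia acts on it through a finite quotient. This condition is visibly preserved by $\alpha$, since $\alpha(I_k)=I_k$ and $\chi_{\mathrm{cyc}}\circ\alpha=\chi_{\mathrm{cyc}}$.
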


\begin{rem*}
    We give some remarks on Theorem B.
    \begin{enumerate}
        \item [(1)] In the proof of Theorem B, (1), it is essential that the $p$-adic cyclotomic character can be reconstructed from $G_{k}$ in a group-theoretic manner.
        This is a significant difference between the $p$-adic cyclotomic character and other Lubin-Tate characters.
        \item [(2)] In \cite{11}, \cite{7}, and \cite{14}, Theorem B, (2) is not stated explicitly.
        However, each of the authors essentially established Theorem B, (2), in \cite{11}, \cite{7}, and \cite{14}.
        Theorem B, (2) was first established in \cite{11} for the case where $k/\Q_{p}$ is an abelian extension of even degree.
        In \cite{11}, by making use of the assumption that the extension is of even degree and abelian, the argument was reduced to the case where $k/\Q_{p}$ is a quadratic extension.
        \item [(3)] In \cite{11}, \cite{7}, and \cite{14}, each of the authors constructed an explicit continuous automorphism $\varphi$ of $G_{k}$ such that $\rho_{k,\pi} \circ \varphi$ is not Hodge-Tate in terms of generators and relations established by Jannsen-Wingberg [cf. \cite{4}, Theorem 7.5.14].
    \end{enumerate}
\end{rem*}

In the present paper, we show the following theorem, which is a generalization of Theorem B, (2), above:

\begin{thma}
    Suppose that $p$ is an odd prime number and that $k/\Q_{p}$ is a nontrivial finite Galois extension.
        Let $\pi \in \mathcal{O}_{k}$ be a uniformizer of $\mathcal{O}_{k}$.
        Then the continuous $p$-adic representation $\rho_{k,\pi} \colon G_k \to \Aut_{\Q_{p}}(k_{+})$ is not Aut-intrinsically Hodge-Tate.
\end{thma}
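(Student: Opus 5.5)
Since the even-degree case is Theorem B, (2), we may assume that $d \stackrel{\mathrm{def}}{=} [k:\Q_{p}]$ is odd and $>1$. The plan is to construct a continuous automorphism $\alpha$ of $G_{k}$ such that $\chi_{k,\pi}\circ\alpha$ is not Hodge-Tate, or more precisely such that $\rho_{k,\pi}\circ\alpha$ is not Hodge-Tate. The first step passes to abelianizations. By local class field theory, $G_{k}^{\ab}\cong \widehat{k^{\times}}$, and $\chi_{k,\pi}$ factors through $\widehat{k^{\times}}\to\mathcal{O}_{k}^{\times}$: it is the projection to $\mathcal{O}_{k}^{\times}$ along $\pi^{\widehat{\Z}}$, followed by $u\mapsto u^{-1}$. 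On the other hand, a one-dimensional character (over a finite extension of $\Q_{p}$) of $G_{k}$ is Hodge-Tate if and only if its restriction to inertia agrees on an open subgroup of $\mathcal{O}_{k}^{\times}$ with $u\mapsto\prod_{\sigma}\sigma(u)^{-n_{\sigma}}$ for integers $n_{\sigma}$. After extending scalars to a Galois closure, $\rho_{k,\pi}\otimes\overline{\Q}_{p}$ decomposes as $\bigoplus_{\tau}\tau\circ\chi_{k,\pi}$. It therefore suffices to find $\alpha$ whose induced automorphism $\alpha^{\ab}$ of $\widehat{k^{\times}}$ restricts, on an open subgroup of $\mathcal{O}_{k}^{\times}$, to a map that is not of this ``algebraic'' form. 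Equivalently, the induced $\Z_{p}$-linear map on $1+\pi^{m}\mathcal{O}_{k}\cong \Z_{p}^{d}$, composed with $\log$, should not be of the form $x\mapsto\sum_{\sigma}n_{\sigma}\sigma(x)$ up to a character of finite order.

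The second step constructs $\alpha$ by means of the Jannsen–Wingberg presentation of $G_{k}$ (cf. \cite{4}, Theorem 7.5.14), as in \cite{11}, \cite{7}, \cite{14}. Since $p$ is odd, $G_{k}$ is generated by $\sigma,\tau,x_{0},\dots,x_{d}$, subject to a single relation involving the tame part and the Demuškin-type relation among the $x_{i}$. Here the $x_{i}$ map, in the abelianization, to a $\Z_{p}$-basis of the principal units, up to torsion. I will look for $\alpha$ that fixes $\sigma,\tau$ and acts on the $x_{i}$ by a substitution preserving the relation, for instance $x_{i}\mapsto x_{i}^{a}$ or transvection-type substitutions $x_{i}\mapsto x_{i}x_{j}^{c}$ that respect the symplectic pairing underlying the relation. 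This gives a large group of $\Z_{p}$-linear automorphisms of $U^{1}_{k}/\mathrm{tors}$ that are realized by automorphisms of $G_{k}$ and induce the identity on the tame quotient. A cleaner alternative is to use the known result that the image of $\Aut(G_{k})\to\Aut(G_{k}^{\ab})$ contains every automorphism of $U^{1}$ that is compatible with the structure of a $\Z_{p}[\Gal(k/\Q_{p})]$-module and with the cyclotomic data. I would try this first, because for Galois $k$ it can be obtained by transporting automorphisms of $G_{\Q_{p}}$-type structure.

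The third step, and the main obstacle, is to show that some realizable linear map $A$ on $U^{1}\otimes\Q_{p}\cong k$ (via $\log$) is not of the form $\sum_{\sigma}n_{\sigma}\sigma$ with $n_{\sigma}\in\Z$. Maps of that algebraic form commute with multiplication by elements of $\Q_{p}$ and lie in the $d$-dimensional span of $\Gal(k/\Q_{p})$ inside $\mathrm{End}_{\Q_{p}}(k)$, which has dimension $d^{2}$. Since $d>1$, a generic realizable $A$, such as a transvection moving a single basis vector, falls outside this span. The difficulty is that the realizable automorphisms must preserve the Demuškin relation, including the $\sigma$-twisted piece. In odd degree, the trick from the even-degree case (reduction to a quadratic subfield) is unavailable, so one needs a transvection $x_{i}\mapsto x_{i}x_{j}^{c}$ with $c\in p\Z_{p}$ nonzero, chosen among the generators not appearing in the relation's twisted part. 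One must then check that it extends to an automorphism, which requires it to be surjective and to preserve the relation, and compute its image in $\mathrm{End}(k)$ to confirm that it is not Galois-algebraic. Finally, one verifies, using that $d\ge 3$ gives enough generators, that $\rho_{k,\pi}\circ\alpha$ fails to be Hodge-Tate.
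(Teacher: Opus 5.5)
There is a genuine gap, in Step 3, which you flag as the main obstacle and then leave open. Your Step 1 is in substance the paper's key Proposition: if $\rho_{k,\pi}\circ\alpha$ is Hodge--Tate, then $\alpha_{+}=\sum_{\sigma}i_{\sigma}\sigma$. Step 2 can be made to work, with two corrections:
\begin{enumerate}
\item The ``known result'' on the image of $\Aut(G_k)\to\Aut(G_k^{\ab})$ is not something you can cite.
\item In the odd case every one of $x_2,\dots,x_d$ occurs in the commutator product. What you need is a substitution fixing that product, e.g.\ $x_2\mapsto x_2x_3$ with all other generators fixed. This fixes $[x_2,x_3]$ and hence the relation, and it is surjective, hence an automorphism. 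No condition $c\in p\Z_p$ is needed.
\end{enumerate}

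Now Step 3. The remark that the Galois span is $d$-dimensional inside a $d^2$-dimensional space, so a ``generic'' $A$ avoids it, proves nothing about a particular $A$. Your plan to ``compute its image in $\mathrm{End}(k)$'' cannot be carried out. The map $\alpha_+$ is a transvection only with respect to the basis $y_1,\dots,y_d$ of $k_+$ coming from the Jannsen--Wingberg generators; that these even form a basis is the nontrivial input \cite{2}, Lemma 1.3. Nothing is known about how this basis sits relative to the action of $\Gal(k/\Q_p)$. What is missing is a basis-independent invariant that separates realizable $\alpha_+$ from elements of $\Q_p[\Gal(k/\Q_p)]$.

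The paper gets such an invariant globally rather than element by element. Suppose $\rho_{k,\pi}$ were Aut-intrinsically Hodge--Tate. Then the key Proposition puts $\Phi(\Out(G_k))$ inside the image $Z$ of $\Q_p[\Gal(k/\Q_p)]^{\times}$, a $p$-adic Lie group of dimension at most $d=2g+1$, and abelian when $d=3$. On the other hand, the mapping-class-group construction gives $\Phi(\Out(G_k))\supset\mathrm{Sp}_{2g}(\Z_p)$. This has dimension $2g^2+g>2g+1$ for $g\geq 2$ and is nonabelian for $g=1$, a contradiction in both cases.

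Your element-wise route can also be closed, and it would then even yield an explicit automorphism, which the paper's argument does not. Take $\alpha$ as above, so $N=\alpha_+-1$ is nonzero, of rank one, with $N^2=0$. Suppose $\alpha_+\in\Q_p[G]$, where $G=\Gal(k/\Q_p)$.
\begin{enumerate}
\item By the normal basis theorem, $N$ becomes left multiplication on $\Q_p[G]$ by some $0\neq a\in\Q_p[G]$, and $N^2=0$ gives $a^2=0$.
\item Since $N$ has rank one, the image $a\Q_p[G]$ is one-dimensional. Hence $a\sigma=\lambda(\sigma)a$ for some character $\lambda\colon G\to\Q_p^{\times}$.
\item Comparing coefficients gives $a=a_e\sum_{\sigma}\lambda(\sigma)^{-1}\sigma$, where $a_e$ is the coefficient of the identity. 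Therefore $a^2=|G|\,a_e\,a$.
\item So $a^2=0$ forces $a_e=0$, i.e.\ $a=0$, a contradiction.
\end{enumerate}
Hence $\alpha_+\notin\Q_p[G]$, and the key Proposition shows $\rho_{k,\pi}\circ\alpha$ is not Hodge--Tate. Without an argument of this kind, your proposal does not yet prove the theorem.
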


\begin{rem*}
We give some remarks on Theorem C.
\begin{enumerate}
    \item [(1)] Here, we note that the novelty of Theorem C lies in the odd-degree cases.
    Thus, in the proof of Theorem C, we assume that $[k \colon \Q_{p}]$ is odd.
    However, the proof of Theorem C in the present paper can also be applied to the case in which $k/\Q_{p}$ is a finite Galois extension of even degree.
    From that point of view, the proof of Theorem C in the present paper is more uniform than the proofs of Theorem B, (2), in \cite{11} and \cite{7}.
    \item [(2)] In the proof of Theorem C, we do not give an explicit continuous automorphism $\varphi$ of $G_{k}$ such that $\rho_{k,\pi} \circ \varphi$ is not Hodge-Tate. 
    Thus, in the case where $k/\Q_{p}$ is of even degree, these two methods have both advantages and disadvantages.
\end{enumerate}

\end{rem*}

At the end of Introduction, we describe the outline of the proof of Theorem C.
Let $\alpha$ be a continuous automorphism of $G_{k}$.
We write $\alpha_{+}$ for the automorphism of the $\Q_{p}$-vector space $k_{+}$ [obtained by taking the underlying $\Q_{p}$-vector space of $k$] induced by mono-anabelian reconstruction algorithms [cf. \cite{1}, Proposition 3.10, $\mathrm{(vi)}$; \cite{1}, Proposition 3.11, $\mathrm{(iv)}$; \cite{2}, Lemma 1.2].
Under the above notation and definition, we show that if $\rho_{k,\pi} \circ \alpha$ is Hodge-Tate, then $\alpha_{+} \in \Q_{p}[\mathrm{Gal}(k/\Q_{p})]$.
Then we obtain Theorem C by combining this observation with the theory of mapping class groups and the theory of $p$-adic Lie groups.

\section*{Notational conventions}

\noindent {\bf Topological groups}.
Let $G$ be a topological group and $\alpha$ a continuous automorphism of the topological group $G$.
Then we shall write $G^{\ab}$ for the \textit{abelianization} of $G$ [i.e., the quotient of $G$ by the closure of the commutator subgroup of $G$] and $\alpha^{\mathrm{ab}}$ for the continuous automorphism of the topological group $G^{\mathrm{ab}}$ induced by $\alpha$ via the functoriality of abelianization.
\vspace{0.3cm}

\noindent {\bf Rings}. In the present paper, every ``ring'' is assumed to be unital, associative, and commutative. If $R$ is a ring, then we shall write $R_{+}$ for the underlying additive group of $R$ and $R^{\times} \subset R$  for the multiplicative group of units of $R$.
\vspace{0.3cm}

\noindent {\bf Mixed-characteristic local fields}. We shall refer to a field isomorphic to a finite extension of $\Q_{p}$, for some prime number $p$, as an \textit{MLF}. Here, ``MLF'' is to be understood as an abbreviation for ``mixed-characteristic local field''. 
Let $k$ be an MLF and $\overline{k}$ an algebraic closure of $k$.
Then we shall write
\begin{itemize}
    \item $\mathcal{O}_{k}$ for the ring of integers of $k$,
    \item  $\mathfrak{m}_{k} \subset \mathcal{O}_{k}$ for the maximal ideal of $\mathcal{O}_{k}$,
    \item $\underline{k} \stackrel{\mathrm{def}}{=}\mathcal{O}_{k}/\mathfrak{m}_{k}$ for the residue field of $\mathcal{O}_{k}$,
    \item $k^{(d=1)} \subset k$ for the [uniquely determined] minimal MLF contained in $k$,
     \item $p_{k}$ for the residue characteristic of $k$,
     \item $d_{k}\stackrel{\mathrm{def}}{=}[k \colon k^{(d=1)}]$ for the degree of the finite extension $k/k^{(d=1)}$,
     \item $f_{k}\stackrel{\mathrm{def}}{=}[\underline{k} \colon \underline{k}^{(d=1)}]$ for the degree of the finite extension $\underline{k}/\underline{k}^{(d=1)}$ [where we write $\underline{k}^{(d=1)}$ for the residue field of the ring of integers of the MLF $k^{(d=1)}$],
    \item $G_{k} \stackrel{\mathrm{def}}{=} \Gal(\overline{k}/k)$ for the absolute Galois group of $k$ determined by the algebraic closure $\overline{k}$,
    \item  $I_{k} \subset G_{k}$ for the inertia subgroup of $G_{k}$, 
     \item $P_{k} \subset I_{k}$ for the wild inertia subgroup of $G_{k}$,
    \item  $\log_{k} \colon \mathcal{O}_{k}^{\times} \to k_{+}$ for the $p_{k}$-adic logarithm,
    \item $\widehat{k^{\times}}$ for the profinite completion of the multiplicative group $k^{\times}$ of $k$, and
    \item  $\rec_{k} \colon \widehat{k^{\times}} \stackrel{\sim}{\longrightarrow} G_{k}^{\ab}$ for the isomorphism induced by the reciprocity homomorphism $k^{\times} \hookrightarrow G_{k}^{\ab}$ in local class field theory.
\end{itemize}

\noindent {\bf Groups of MLF-type}.
We shall refer to a topological group isomorphic to the absolute Galois group of an MLF as a \textit{group of MLF-type}.

Let us recall [cf.\ \cite{1}, Definition 3.5; \cite{1}, Proposition 3.6; \cite{1}, Definition 3.10; \cite{1}, Proposition 3.11] that there exist functorial group-theoretic algorithms for constructing, from a group of MLF-type $G$,
\begin{itemize}
    \item a prime number $p(G)$,
    \item positive integers $d(G)$, $f(G)$,
    \item subgroups $P(G) \subset I(G) \subset G$ of $G$,
    \item subgroups $\mathcal{O}^{\times}(G) \subset k^{\times}(G) \subset G^{\ab}$, whose final inclusion $k^{\times}(G) \subset G^{\ab}$ we denote by $\rec_{G}$, and
    \item a topological group $k_{+}(G)$
\end{itemize}
 which ``correspond'' to
\begin{itemize}
    \item the prime number $p_{k}$,
    \item the positive integers $d_{k}$, $f_{k}$,
    \item the subgroups $P_{k} \subset I_{k} \subset G_{k}$ of $G_{k}$,
    \item the subgroups $\mathcal{O}_{k}^{\times} \subset k^{\times} \stackrel{\rec_{k}}{\hookrightarrow} G_{k}^{\ab}$, and
    \item the topological group $k_{+}$,
\end{itemize}
respectively.

Moreover,  it follows from \cite{1}, Proposition 3.11, $\rm(\hspace{.18em}i\hspace{.18em})$, $\rm(i\hspace{-.08em}v\hspace{-.06em})$, that we have natural homomorphisms
\begin{align*}
    \Aut(G) \to \Aut(k^{\times}(G)),\ \ \ \Aut(G) \to \Aut(k_{+}(G)).
\end{align*}
 Since the automorphism of $G^{\ab}$ induced by an inner automorphism of $G$ is trivial, it follows from the constructions of $k^{\times}(G)$ and $k_{+}(G)$ that the automorphisms of $k^{\times}(G)$ and $k_{+}(G)$ induced by an inner automorphism of $G$ are trivial.
  Thus, the above two homomorphisms determine group homomorphisms
  \begin{align*}
      \Out(G) \to \Aut(k^{\times}(G)),\ \ \ \Out(G) \to \Aut(k_{+}(G)).
  \end{align*}
Let $\alpha$ be an element of $\Out(G)$. 
We write $\alpha^{\times}$ [respectively, $\alpha_{+}$] for the image of $\alpha$ by this homomorphism $\Out(G) \to \Aut(k^{\times}(G))$ [respectively, $\Out(G) \to \Aut(k_{+}(G))$].
In the present paper, we call $\alpha^{\times}$ and $\alpha_{+}$ \textit{the automorphisms induced from $\alpha$ by the mono-anabelian reconstruction algorithms}.

Let $k$ be an MLF, $\overline{k}$ an algebraic closure of $k$, and $\alpha$ an element of $\Out(G_{k})$.
By abuse of notation, we shall denote by $\alpha_{+} \colon k_{+} \stackrel{\sim}{\longrightarrow} k_{+}$, $\alpha^{\times} \colon k^{\times} \stackrel{\sim}{\longrightarrow} k^{\times}$ the respective images of $\alpha_{+} \colon k_{+}(G_{k}) \stackrel{\sim}{\longrightarrow} k_{+}(G_{k})$, $\alpha^{\times} \colon k^{\times}(G_{k}) \stackrel{\sim}{\longrightarrow} k^{\times}(G_{k})$ by the isomorphisms $\Aut(k_{+}(G_{k})) \stackrel{\sim}{\longrightarrow} \Aut(k_{+})$, $\Aut(k^{\times}(G_{k})) \stackrel{\sim}{\longrightarrow} \Aut(k^{\times})$ induced by the isomorphisms $k_{+} \stackrel{\sim}{\longrightarrow} k_{+}(G_{k})$, $k^{\times} \stackrel{\sim}{\longrightarrow} k^{\times}(G_{k})$ of \cite{1}, Proposition 3.11, $\rm(\hspace{.18em}i\hspace{.18em})$, $\rm(i\hspace{-.08em}v\hspace{-.06em})$.

\section*{Proof of the main theorem}

Let $k$ be an MLF, $\overline{k}$ an algebraic closure of $k$, and $G$ a group of MLF-type.
We begin by giving an overview of the remainder of the present paper.
First, we recall various notions introduced in \cite{7} for the convenience of the reader.
Next, we review the classification of abelian Hodge-Tate representations and prove a key lemma.
Finally, we prove the main theorem of the present paper by combining this lemma with certain ``mapping class group and $p$-adic Lie group techniques'' developed in \cite{7}.

\begin{thm}\label{generator and relation of group of MLF-type} 
   Suppose that $p(G)$ is an odd prime number.
   Then there exist $\sigma$, $\tau$, $x_{0}, \ldots, x_{d(G)}\in G$, positive integers $s$, $t$, an element $x_{0}^{\prime} \in \overline{\langle\tau, x_{0} \rangle}$, and an element $x_{1}^{\prime} \in \overline{\langle \sigma,\tau,x_{1}\rangle}$, where ``$\overline{\langle S \rangle}$'' denotes the closed subgroup of $G$ topologically generated by ``$S$'', such that the following conditions hold:
    \begin{enumerate}
     \item [(1)] The profinite group $G$ is presented as the profinite group topologically generated by $\sigma$, $\tau$, $x_{0}, \ldots, x_{d(G)}\in G$ and subject to the relations described in the conditions (2), (3), and (4) below.
  \item [(2)] The closed normal subgroup $P(G)$ of $G$ is pro-$p(G)$ and topologically normally generated by $x_{0},\ldots,x_{d(G)}$.
  \item [(3)] The elements $\sigma$, $\tau$ satisfy the relation $\sigma \tau \sigma^{-1}=\tau^{p(G)^{f(G)}}$.
  \item [(4)] In addition, the generators satisfy one further relation: 
   \begin{enumerate}
    \item for even $d(G)$,
    \begin{align*}
        \sigma x_{0} \sigma^{-1}=(x_{0}^{\prime})^{t} x_{1}^{p(G)^s}[x_{1},x_{2}][x_{3},x_{4}]\cdots[x_{d(G)-1},x_{d(G)}];
    \end{align*}
    \item for odd $d(G)$, 
    \begin{align*}
         \sigma x_{0} \sigma^{-1}=(x_{0}^{\prime})^{t} x_{1}^{p(G)^s}[x_{1},x_{1}^{\prime}][x_{2},x_{3}]\cdots[x_{d(G)-1},x_{d(G)}].
    \end{align*}
  \end{enumerate}
 
\end{enumerate}
    
\end{thm}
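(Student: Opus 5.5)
This theorem is essentially the Jannsen--Wingberg presentation of the absolute Galois group of a $p$-adic field for odd $p$ [cf.\ \cite{4}, Theorem 7.5.14]. The plan is to reduce to that statement rather than reprove it.

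Since $G$ is of MLF-type, we may fix an MLF $k$ with an isomorphism $G \cong G_{k}$. By the functoriality of the reconstruction algorithms of \cite{1}, this isomorphism identifies $p(G)$, $d(G)$, $f(G)$, $P(G)$ with $p_{k}$, $d_{k}$, $f_{k}$, $P_{k}$. It therefore suffices to prove the statement for $G_{k}$ with $p=p_{k}$ odd, $d=[k:\Q_{p}]$, and $q=p^{f_{k}}$ the order of the residue field.

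For $G_{k}$ I would invoke Jannsen--Wingberg in the form of \cite{4}, Theorem 7.5.14. That theorem treats the case $p \neq 2$, which is why the hypothesis that $p(G)$ is odd appears. It gives generators $\sigma,\tau,x_{0},\ldots,x_{d}$ subject to three kinds of relations:
\begin{itemize}
\item the tame relation $\sigma\tau\sigma^{-1}=\tau^{q}$;
\item the requirement that $x_{0},\ldots,x_{d}$ topologically normally generate the closed normal subgroup they generate, which is pro-$p$ and equals $P_{k}$;
\item one further relation expressing $\sigma x_{0}\sigma^{-1}$.
\end{itemize}
The last relation is built from the root-of-unity data of $k(\mu_{p})$. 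The elements $x_{0}'$ and $x_{1}'$ are defined there explicitly in terms of $\tau,x_{0}$ (respectively $\sigma,\tau,x_{1}$). The integer $p^{s}$ is the order of the $p$-power roots of unity in a suitable tame extension, and $t$ is an integer attached to the cyclotomic character of the tame quotient.

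The relation splits according to the parity of $d$. For even $d$ it reads
\[
x_{1}^{p^{s}}[x_{1},x_{2}]\cdots[x_{d-1},x_{d}].
\]
For odd $d$ the extra generator $x_{1}'$ is inserted, giving
\[
x_{1}^{p^{s}}[x_{1},x_{1}'][x_{2},x_{3}]\cdots[x_{d-1},x_{d}].
\]
This is exactly condition (4). Conditions (1) through (3) are then restatements of the presentation, together with the identification of the normal subgroup generated by the $x_{i}$ with $P_{k}$. That identification follows because $G_{k}/\overline{\langle\langle x_{i}\rangle\rangle}$ is the tame group $\overline{\langle\sigma,\tau\mid\sigma\tau\sigma^{-1}=\tau^{q}\rangle}$, which is $G_{k}/P_{k}$.

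The main obstacle is bookkeeping rather than mathematics. One has to check that the normalization in \cite{4} matches the form stated here: that $x_{0}'$ lies in $\overline{\langle\tau,x_{0}\rangle}$, that $x_{1}'$ lies in $\overline{\langle\sigma,\tau,x_{1}\rangle}$, and that $s,t$ are positive integers. In \cite{4} these are given by explicit formulas, namely certain $\pi$-products of conjugates of $x_{0}$ by powers of $\tau$, and for $x_{1}'$ of $x_{1}$ by elements of $\langle\sigma,\tau\rangle$. So I would read off the memberships directly from those formulas. If a sign or exponent convention differs, I would replace $t$ by a positive representative modulo the relevant profinite order.
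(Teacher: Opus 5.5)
Your proposal takes the same approach as the paper. The paper's proof just transports $p(G)$, $d(G)$, $f(G)$, $P(G)$ to an MLF $k$ with $G\cong G_{k}$ via \cite{1}, Proposition 3.6, and then cites the Jannsen--Wingberg presentation \cite{4}, Theorem 7.5.14. One small fix: if the exponent on $x_{0}'$ in \cite{4} is only a $p$-adic integer, do not replace it by a positive representative (that fails when $x_{0}'$ has infinite order); instead absorb it into $x_{0}'$, which still lies in the closed subgroup $\overline{\langle\tau,x_{0}\rangle}$, and take $t=1$.
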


\begin{proof}
    This assertion follows from \cite{4}, Theorem 7.5.14, together with \cite{1}, Proposition 3.6.
\end{proof}

In the remainder of the present paper, we apply the notational conventions introduced in the statement of Theorem \ref{generator and relation of group of MLF-type} in each of the situations in which $p(G)$ is assumed to be odd.
Moreover, if $p(G)$ is odd, then, for each $i=1,2,\ldots,d(G)$, write $y_{i} \in k_{+}(G)$ for the image of $x_{i}$ in $k_{+}(G)$ by the composite $P(G) \hookrightarrow I(G) \to \mathcal{O}^{\times}(G) \to k_{+}(G)$ [cf.\ conditions (1), (2) of Theorem \ref{generator and relation of group of MLF-type}; \cite{1}, Definition 3.10, $\rm(\hspace{.18em}i\hspace{.18em})$, $\rm(\hspace{.08em}ii\hspace{.08em})$, $\rm(\hspace{.06em}v\hspace{.06em})$].

We recall that the topological group $k_{+}(G)$ has a natural structure of a $\Q_{p(G)}$-vector space of dimension $d(G)$ [cf.\ \cite{2}, Lemma 1.2] and that, for any continuous automorphism $\alpha$ of $G$, the induced automorphism $\alpha_{+}$ of $k_{+}(G)$ is an automorphism of $\Q_{p(G)}$-vector spaces.

\begin{lem}
    Suppose that $d(G)>1$ and $p(G)$ is odd.
    Then the $d(G)$ elements $y_{1},\ldots,y_{d(G)}$ defined in the discussion following Theorem \ref{generator and relation of group of MLF-type} form a basis of the $\Q_{p(G)}$-vector space $k_{+}(G)$.
\end{lem}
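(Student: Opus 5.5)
The plan is to work with $G=G_{k}$ and to compute the image of $P(G)$ in $k_{+}(G)$ through abelianization, using the presentation of Theorem \ref{generator and relation of group of MLF-type}. Concretely, the composite $P(G)\hookrightarrow I(G)\to \mathcal{O}^{\times}(G)\to k_{+}(G)$ corresponds, via $\rec_{k}$, to
\[
P_{k}\hookrightarrow I_{k}\to I_{k}^{\ab}\to \mathcal{O}_{k}^{\times}\xrightarrow{\log_{k}} k_{+}.
\]
The image of $P_{k}$ in $\mathcal{O}_{k}^{\times}$ is the group $U^{1}_{k}$ of principal units. Since $\log_{k}(U^{1}_{k})$ is an open compact subgroup of $k_{+}$, it spans $k_{+}$ over $\Q_{p}$.

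Next I would reduce from normal generation to ordinary generation. Put $y_{0}$ for the image of $x_{0}$. The map $I(G)\to\mathcal{O}^{\times}(G)\subset G^{\ab}$ is compatible with $G$-conjugation, and inner automorphisms act trivially on $G^{\ab}$. Hence, by condition (2), the image of $P(G)$ is the closed $\Z_{p}$-submodule generated by $y_{0},y_{1},\ldots,y_{d(G)}$. Therefore $y_{0},\ldots,y_{d(G)}$ span the $d(G)$-dimensional space $k_{+}(G)$, and it suffices to show that $y_{0}$ lies in the span of $y_{1},\ldots,y_{d(G)}$.

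To eliminate $y_{0}$, I push relation (4) into $G^{\ab}$. The left-hand side $\sigma x_{0}\sigma^{-1}$ has the same image as $x_{0}$, and all commutators die, including $[x_{1},x_{1}']$ in the odd case, where $\sigma$ appears. For the term $(x_{0}')^{t}$, note that $x_{0}'\in\overline{\langle\tau,x_{0}\rangle}\subset I(G)$, because $\tau$ generates tame inertia. The image of $\tau$ in $\mathcal{O}_{k}^{\times}$ is a root of unity of order prime to $p$, so $\log_{k}$ kills it. Consequently the image of $x_{0}'$ in $k_{+}$ is $a\,y_{0}$ for some $a\in\Z_{p}$. Applying $\log_{k}$ to the abelianized relation then gives
\[
(1-ta)\,y_{0}=p^{s}\,y_{1}.
\]
If $1-ta\neq 0$, then $y_{0}\in\Q_{p}y_{1}$, so $y_{1},\ldots,y_{d(G)}$ span $k_{+}(G)$. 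Having exactly $d(G)$ elements, they form a basis.

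The main obstacle is excluding $1-ta=0$; in that case the relation would force $y_{1}=0$, and the counting argument alone gives no contradiction. My first attempt would be to compute $G^{\ab}$ directly from the presentation. Its pro-$p$ part is generated by the images of $x_{0},\ldots,x_{d(G)}$ modulo the single relation $t\,\overline{x_{0}'}+p^{s}\overline{x_{1}}-\overline{x_{0}}=0$. Comparing this with $U^{1}_{k}\cong \mu_{p^{\infty}}(k)\times\Z_{p}^{d(G)}$ via $\rec_{k}$, and recalling that in Jannsen--Wingberg $p^{s}=\#\mu_{p^{\infty}}(k)$, the relation must produce exactly the cyclic torsion $\Z/p^{s}$. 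If the coefficient of $\overline{x_{0}}$ became $0$, the relation $p^{s}\overline{x_{1}}=0$ would make $\overline{x_{1}}$ the torsion generator while $\overline{x_{0}},\overline{x_{2}},\ldots$ stayed free. I would try to rule this out using the explicit form of $x_{0}'$ in \cite{4}, Theorem 7.5.14, which I expect to be essentially $x_{0}^{\pi}$ twisted by the tame action, with $ta$ a unit multiple distinct from $1$. Alternatively I would exploit the hypothesis $d(G)>1$, which is where I expect it to enter.
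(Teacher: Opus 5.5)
Your reduction is correct: $y_0,\ldots,y_{d(G)}$ span $k_+(G)$, the image of $\tau$ is killed by $\log_k$, and relation (4) abelianizes to $(1-ta)\,y_0=p^{s}y_1$. From that point the lemma is \emph{equivalent} to $1-ta\neq 0$: if $ta=1$ then $y_1=0$. This is exactly the step you leave open, so the proof has a genuine gap.

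Nothing recorded in Theorem~1 decides this step. You only know $x_0'\in\overline{\langle\tau,x_0\rangle}$ and $t\in\Z_{>0}$, and the shape of (4) is perfectly compatible with, say, $x_0'=x_0$, $t=1$. So the missing ingredient is real input about the explicit Jannsen--Wingberg element $\langle x_0,\tau\rangle$ and its exponent in \cite{4}, Theorem~7.5.14, or Hoshi's own argument. The paper gives no argument of its own here; it simply quotes \cite{2}, Lemma~1.3.

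Your fallback ideas do not close the gap.

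\textbf{The value of $s$.} $p^{s}$ is not $\#\mu_{p^\infty}(k)$. In Theorem~1 one always has $s\geq 1$, whereas $\mu_{p^\infty}(k)$ is trivial when $\mu_p\not\subset k$. With this corrected, your comparison with $U^1_k\cong\mu_{p^\infty}(k)\times\Z_p^{d(G)}$ gives
\[
\min\{v_p(1-ta),s\}=v_p(\#\mu_{p^\infty}(k)).
\]
The reason is that $\Z_p^{d(G)+1}/\Z_p\bigl((1-ta)e_0-p^{s}e_1\bigr)$, where $e_0,\ldots,e_{d(G)}$ is the standard basis, has torsion of order $p^{\min\{v_p(1-ta),s\}}$. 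This does prove $1-ta\neq 0$ whenever $\#\mu_{p^\infty}(k)<p^{s}$, in particular whenever $\mu_p\not\subset k$. In the remaining case $\#\mu_{p^\infty}(k)=p^{s}$, however, it only yields $v_p(1-ta)\geq s$. That allows $1-ta=0$, which is the degeneracy you observed yourself.

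\textbf{The hypothesis $d(G)>1$.} It cannot help in that case. The case forces $\mu_p\subset k$, hence $(p-1)\mid d(G)$, so $d(G)\geq 2$ holds automatically.

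In the case $\#\mu_{p^\infty}(k)=p^{s}$ your argument therefore stays incomplete until you actually extract $ta\neq 1$ from the explicit form of the relation, or cite \cite{2}, Lemma~1.3, as the paper does.
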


\begin{proof}
    This assertion is none other than \cite{2}, Lemma 1.3.
\end{proof}

One verifies easily that the isomorphism of topological groups $k_{+}(G_{k}) \stackrel{\sim}{\longrightarrow} k_{+}$ of \ \cite{1}, Proposition 3.11, $\rm(i\hspace{-.08em}v\hspace{-.06em})$, is also an isomorphism of $\Q_{p_{k}}$-vector spaces [here, we have $p_{k} = p(G_{k})$ --- cf.\ \cite{1}, Proposition 3.6].
By abuse of notation, if $d_{k}>1$ and $p_{k}$ is odd, then, for each integer $i$ satisfying $1 \leq i \leq d_{k}$, we write $y_{i} \in k_{+}$ for the image of $y_{i} \in k_{+}(G_{k})$ by the isomorphism $k_{+}(G_{k}) \stackrel{\sim}{\longrightarrow} k_{+}$ of \cite{1}, Proposition 3.11, $\rm(i\hspace{-.08em}v\hspace{-.06em})$.
In the remainder of the present paper, if $d(G)$ [resp. $d_{k}$] is greater than one, and $p(G)$ [resp. $p_{k}$] is odd, then we equip $k_{+}(G)$ [resp. $k_{+}$] with this basis, which allows us to identify $\Aut_{\Q_{p(G)}}(k_{+}(G))$ [resp. $\Aut_{\Q_{p_{k}}}(k_{+})$] with $\GL_{d(G)}(\Q_{p(G)})$ [resp. $\GL_{d_{k}}(\Q_{p_{k}})$].
We equip $\GL_{d(G)}(\Q_{p(G)})$ [resp. $\GL_{d_{k}}(\Q_{p_{k}})$] with the natural $p(G)$-adic [resp. $p_{k}$-adic] Lie group structure.

Next, we review the profinite group structure of $\Out(G)$.
It follows from \cite{4}, Theorem 7.4.1, and \cite{19}, Proposition 4.4.3, that $\Out(G)$ has a natural profinite group structure.
In the remainder of the present paper, we endow $\Out(G)$ with this profinite group structure.

\begin{lem}\label{closed map}
    The action $\mathrm{Out}(G) \curvearrowright k_{+}(G)$ which is defined via the mono-anabelian reconstruction algorithm is continuous.  
    In particular, the induced map 
    \begin{align*}
        \Phi \colon \Out(G) \to \Aut_{\Q_{p(G)}}(k_{+}(G)) \stackrel{\sim}{\longrightarrow} \mathrm{GL}_{d(G)}(\mathbb{Q}_{p(G)})
    \end{align*}
    is continuous and closed.
\end{lem}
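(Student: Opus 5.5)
The plan has two parts: first show that the action map $\Out(G)\times k_{+}(G)\to k_{+}(G)$ is continuous, and then deduce the statement about $\Phi$ by a compactness argument.

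\emph{Continuity of the action.} Recall how $k_{+}(G)$ is built by the algorithm of \cite{1}, Definition 3.10. One takes $\mathcal{O}^{\times}(G)\subset G^{\ab}$, which is the image of $I(G)$ in $G^{\ab}$. One then passes to its torsion-free pro-$p(G)$ part, or equivalently to the inverse limit along the multiplication-by-$p(G)$ maps, and tensors with $\Q_{p(G)}$. Let $U\subset \mathcal{O}^{\times}(G)$ be the closed characteristic subgroup of principal units modulo torsion. Then $k_{+}(G)$ is identified with $U\otimes_{\Z_{p(G)}}\Q_{p(G)}$, where $U$ is a free $\Z_{p(G)}$-module of rank $d(G)$ and hence a lattice.

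An element of $\Out(G)$ acts on $U$ through the natural map $\Out(G)\to \Aut(G^{\ab})$. So it suffices to show two things:
\begin{enumerate}
\item[(a)] the map $\Out(G)\to\Aut(G^{\ab})$ is continuous, where $\Aut(G^{\ab})$ carries the compact-open topology, which is profinite because $G^{\ab}$ is topologically finitely generated;
\item[(b)] restriction to the characteristic subgroup $U$ is continuous.
\end{enumerate}

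For (a), I would use that the profinite structure on $\Out(G)$ from \cite{4}, Theorem 7.4.1, and \cite{19}, Proposition 4.4.3, has as a basis of neighbourhoods of the identity the kernels of $\Out(G)\to\Out(G/N)$, where $N$ runs over open characteristic subgroups. Given an open subgroup $M\subset G^{\ab}$, I would choose an open characteristic $N\subset G$ whose image in $G^{\ab}$ lies in $M$. An automorphism that acts on $G/N$ as an inner automorphism then acts trivially on $G^{\ab}/M$.

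Consequently the image of $\Out(G)$ in $\Aut_{\Z_{p(G)}}(U)\cong \GL_{d(G)}(\Z_{p(G)})$ is obtained by a continuous homomorphism. Composing with the inclusion $\GL_{d(G)}(\Z_{p(G)})\subset \GL_{d(G)}(\Q_{p(G)})$, which is continuous, and with the identification given by the basis $y_{1},\ldots,y_{d(G)}$, which is a fixed linear change of coordinates, we get continuity of $\Phi$. Joint continuity of the action on $k_{+}(G)$ follows because $\GL_{d(G)}(\Q_{p(G)})$ acts continuously on $\Q_{p(G)}^{d(G)}$.

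\emph{Closedness of $\Phi$.} This part is formal. $\Out(G)$ is profinite, hence compact, and $\GL_{d(G)}(\Q_{p(G)})$ is Hausdorff. A continuous map from a compact space to a Hausdorff space is closed: a closed subset of $\Out(G)$ is compact, its image is compact, and a compact subset of a Hausdorff space is closed.

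I expect the main obstacle to be step (a): making precise which profinite topology on $\Out(G)$ is meant, and checking that it is compatible with the functorial reconstruction of $k_{+}(G)$. This requires verifying that the intermediate constructions (torsion quotient, $p$-adic completion) are carried out with characteristic subgroups, so that the action factors through finite quotients at each finite level.
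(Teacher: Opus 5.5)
Your proof is correct. It differs from the paper only in that the paper gives no argument here: it simply quotes \cite{7}, Lemma 2.13, whereas you prove the statement directly.

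Your argument runs as follows. Because $G$ is topologically finitely generated, the subgroups $\ker(\Out(G)\to\Out(G/N))$, with $N$ open characteristic, form a neighbourhood basis of the identity in $\Out(G)$. Such elements act trivially on $G^{\ab}/M$ as soon as the image of $N$ lies in $M$, and this controls the action on the $\Z_{p(G)}$-lattice $\mathcal{O}^{\times}(G)/(\mathrm{torsion})$ spanning $k_{+}(G)$. Closedness then follows formally from compactness of $\Out(G)$.

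The citation keeps the paper short. Your route makes the ingredients explicit and gives a little more: $\mathrm{Im}(\Phi)$ is compact and stabilizes a lattice, so it lies in a conjugate of $\GL_{d(G)}(\Z_{p(G)})$.

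The wording of step (b) needs fixing. $U$ is a subquotient of $G^{\ab}$, not a subgroup. Also, neither $\mathcal{O}^{\times}(G)$ nor $U$ is characteristic in the abstract group $G^{\ab}$: for $G\cong G_{k}$ one has $G^{\ab}\cong\widehat{\Z}\times\mathcal{O}_{k}^{\times}$, and an automorphism can mix the $\Z_{p}$-factor of $\widehat{\Z}$ with the free part of the units. What you actually need, and what holds, is stability under every $\alpha^{\ab}$ with $\alpha\in\Aut(G)$; this follows because $I(G)$ is characteristic in $G$. So (b) should be stated for the stabilizer of $\mathcal{O}^{\times}(G)$ in $\Aut(G^{\ab})$, where the continuity check is the same as in (a).
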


\begin{proof}
    This assertion is none other than \cite{7}, Lemma 2.13.
\end{proof}

\begin{lem}\label{p-adic Lie group structure}
    The image of the homomorphism $\Phi$ of Lemma \ref{closed map} has a natural $p(G)$-adic Lie group structure.
\end{lem}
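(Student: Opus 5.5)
The image $\Phi(\Out(G))$ is a closed subgroup of $\GL_{d(G)}(\Q_{p(G)})$. The plan is to invoke Cartan's closed subgroup theorem in its $p$-adic form: a closed subgroup of a $p$-adic Lie group is itself a $p$-adic Lie subgroup [cf., e.g., Serre, \emph{Lie Algebras and Lie Groups}, Part II, Chapter V, \S 9; or Dixon--du Sautoy--Mann--Segal, Theorem 9.6]. That the image is closed follows from Lemma \ref{closed map}, so the argument has essentially two steps.

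\textbf{Step 1: the image is a closed subgroup.} First, $\Phi$ is a group homomorphism, being the composite of the homomorphism $\Out(G) \to \Aut(k_{+}(G))$ defined via the mono-anabelian reconstruction algorithms with the identification $\Aut_{\Q_{p(G)}}(k_{+}(G)) \stackrel{\sim}{\to} \GL_{d(G)}(\Q_{p(G)})$ given by the chosen basis. Hence $\Phi(\Out(G))$ is a subgroup of $\GL_{d(G)}(\Q_{p(G)})$. By Lemma \ref{closed map}, $\Phi$ is closed, so $\Phi(\Out(G))$ is a closed subset. Alternatively, $\Out(G)$ is profinite, hence compact, and $\Phi$ is continuous into a Hausdorff space, so its image is compact and therefore closed.

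\textbf{Step 2: apply Cartan's theorem.} The group $\GL_{d(G)}(\Q_{p(G)})$ is an open subset of $M_{d(G)}(\Q_{p(G)}) \cong \Q_{p(G)}^{d(G)^2}$ with polynomial multiplication and inversion, so it is a $p(G)$-adic Lie group. Applying the closed subgroup theorem to $\Phi(\Out(G))$ then endows it with a unique $p(G)$-adic Lie group structure compatible with its subspace topology. The word ``natural'' is justified because this structure is the one induced from the ambient group.

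The main point needing care is the version of the closed subgroup theorem being used. In the $p$-adic setting it is most cleanly seen via compactness: since $\Phi(\Out(G))$ is compact, it lies in some open compact subgroup. By intersecting with a uniform pro-$p$ open subgroup $U \subset \GL_{d(G)}(\Z_{p(G)})$, for instance $1+p(G)^{2}M_{d(G)}(\Z_{p(G)})$, we obtain a closed subgroup of a finitely generated pro-$p$ group. Such a subgroup is again finitely generated pro-$p$ of finite rank and hence $p$-adic analytic by Lazard's theorem. This gives the Lie structure on an open subgroup of the image, which then extends to the whole image by translation. I expect no real obstacle beyond citing this correctly.
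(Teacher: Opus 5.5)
Your proposal is correct and matches the paper's proof: the image of $\Phi$ is a closed subgroup of $\GL_{d(G)}(\Q_{p(G)})$ by Lemma \ref{closed map}, and the closed subgroup theorem (the paper cites exactly Dixon--du Sautoy--Mann--Segal, Theorem 9.6) then gives it a $p(G)$-adic Lie group structure. Your compactness argument for closedness and your sketch via uniform subgroups and Lazard's theorem are valid, but they only unpack what that cited theorem already provides.
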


\begin{proof}
    This assertion follows immediately from Lemma \ref{closed map}, together with \cite{17}, Theorem 9.6.
\end{proof}

In the remainder of the present paper, we assume that $p_{k}$ [resp. $p(G)$] is an odd prime number and that $d_{k}$ [resp. $d(G)$] is an odd integer greater than one.

Next, we review the subgroup of $\mathrm{Out}(G)$ that corresponds to a ``mapping class group'' introduced in the discussion following \cite{7}, Lemma 2.15.

Let $g \stackrel{\mathrm{def}}{=} \frac{d(G) - 1}{2}$, $S$ a closed orientable surface of genus $g\ (\geq 1)$, and $P$ a point on $S$.
We write $\mathrm{Mod}(S \setminus \{P\})$ for the mapping class group of $S \setminus \{P\}$.

In the remainder of the present paper, we regard $\mathrm{Sp}_{2g}(\Z_{p(G)})$ as a subgroup of $\GL_{d(G)}(\Q_{p(G)})$ via the injective group homomorphism that is defined by
\[
    A \mapsto \begin{bmatrix}
    1 & 0_{1 \times 2g} \\
    0_{2g \times 1} & A
    \end{bmatrix},
\]
where $0_{1 \times 2g}$ [respectively, $0_{2g \times 1}$] denotes the $1 \times 2g$ matrix [respectively, the $2g \times 1$ matrix] whose entries are all $0$.
Then there exists [cf. the discussion following \cite{7}, Lemma 2.15] a map
\begin{align*}
  \rho \colon \Mod(S \setminus \{P\}) \to \Out(G),
\end{align*}
which is not necessarily a homomorphism of groups, such that the following diagram is commutative:
\[
\begin{tikzcd}
  \Out(G) \arrow[r, "\Phi"] & \GL_{d(G)}(\Q_{p(G)}) \\
  \Mod(S \setminus\{P\}) \arrow[r] \arrow[u, "\rho"] & \mathrm{Sp}_{2g}(\Z) \arrow[u, "\subset"'].
\end{tikzcd}
\]
Here, the lower horizontal arrow is the surjective homomorphism discussed in \cite{7}, Theorem 2.17.
Write $D \subset \Out(G)$ for the closed subgroup of $\Out(G)$ that is topologically generated by the image of $\rho$.

In what follows, for a $p(G)$-adic [resp. $p_{k}$-adic] Lie group $X$, we write $\dim(X)$ for the dimension of $X$ as such a Lie group.

\begin{lem}\label{dimention of the image of Phi}
    The image of the homomorphism $\Phi \colon \Out(G) \to \mathrm{GL}_{d(G)}(\Q_{p(G)})$ of Lemma \ref{closed map} contains $\mathrm{Sp}_{2g}(\Z_{p(G)})$.
    In particular, we have $\dim (\mathrm{Im}(\Phi)) \geq 2g^{2}+g$.
\end{lem}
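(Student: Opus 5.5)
The plan is to combine the commutative square relating $\rho$, $\Phi$ and $\mathrm{Sp}_{2g}(\Z)$ with the closedness of $\Phi$ from Lemma \ref{closed map} and a density argument.

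First, commutativity of the diagram shows that $\Phi(\rho(\Mod(S\setminus\{P\})))$ is exactly the image of $\Mod(S \setminus\{P\})$ in $\mathrm{Sp}_{2g}(\Z)$. Since the lower horizontal arrow is surjective by \cite{7}, Theorem 2.17, this image is all of $\mathrm{Sp}_{2g}(\Z)$. Hence $\mathrm{Im}(\Phi)\supset \mathrm{Sp}_{2g}(\Z)$, viewed inside $\GL_{d(G)}(\Q_{p(G)})$ via the block embedding fixed above.

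Next, $\Out(G)$ is profinite, so it is compact. By Lemma \ref{closed map}, $\Phi$ is a continuous homomorphism, so $\mathrm{Im}(\Phi)=\Phi(\Out(G))$ is a compact, hence closed, subgroup of $\GL_{d(G)}(\Q_{p(G)})$. It therefore contains the $p(G)$-adic closure of $\mathrm{Sp}_{2g}(\Z)$. By strong approximation for the simply connected group $\mathrm{Sp}_{2g}$, $\mathrm{Sp}_{2g}(\Z)$ is dense in $\mathrm{Sp}_{2g}(\Z_{p(G)})$. Concretely, the reductions $\mathrm{Sp}_{2g}(\Z)\to \mathrm{Sp}_{2g}(\Z/p^n\Z)$ are surjective for all $n$, since $\mathrm{Sp}_{2g}$ over $\Z/p^n$ is generated by elementary symplectic transvections, each of which lifts to $\Z$. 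Since the block embedding is a closed embedding, we conclude $\mathrm{Sp}_{2g}(\Z_{p(G)})\subset \mathrm{Im}(\Phi)$.

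For the dimension statement, Lemma \ref{p-adic Lie group structure} makes $\mathrm{Im}(\Phi)$ a $p(G)$-adic Lie group, realized as a closed subgroup of $\GL_{d(G)}(\Q_{p(G)})$. It contains the closed subgroup $\mathrm{Sp}_{2g}(\Z_{p(G)})$, which is open in $\mathrm{Sp}_{2g}(\Q_{p(G)})$ and so has dimension $\dim \mathfrak{sp}_{2g}=2g^2+g$. By Cartan's closed subgroup theorem, a closed subgroup of a $p$-adic Lie group is a Lie subgroup of dimension at most that of the ambient group. This gives $\dim(\mathrm{Im}(\Phi))\ge 2g^2+g$.

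The main obstacle is the density step. One must check that the surjectivity of $\mathrm{Sp}_{2g}(\Z)\to\mathrm{Sp}_{2g}(\Z/p^n)$ is applied compatibly with the specific embedding and basis $y_1,\dots,y_{d(G)}$ used to define $\Phi$. That compatibility is already built into the commutative square, so I would simply cite strong approximation, or the transvection-generation argument, at that point.
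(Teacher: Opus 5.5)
Your proposal is correct and follows essentially the same route as the paper. Both arguments use the commutative square together with surjectivity of $\Mod(S\setminus\{P\}) \to \mathrm{Sp}_{2g}(\Z)$ to get $\mathrm{Sp}_{2g}(\Z)$ into the image. Both then use closedness of the image, the fact that the closure of $\mathrm{Sp}_{2g}(\Z)$ in $\GL_{d(G)}(\Q_{p(G)})$ is $\mathrm{Sp}_{2g}(\Z_{p(G)})$, and $\dim \mathrm{Sp}_{2g}(\Z_{p(G)}) = 2g^2+g$. The differences are minor: you get closedness from compactness of $\Out(G)$ where the paper cites Lemma \ref{closed map} applied to $D$, and you spell out the density step (strong approximation, or lifting symplectic transvections), which the paper takes as known.
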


\begin{proof}
    It follows immediately from Lemma \ref{closed map}, the commutative diagram above, and the fact that the topological closure of $\mathrm{Sp}_{2g}(\Z)$ in $\GL_{d(G)}(\Q_{p(G)})$ is $\mathrm{Sp}_{2g}(\Z_{p(G)}) \subset \GL_{d(G)}(\Q_{p(G)})$ that $\Phi(D) \supset \mathrm{Sp}_{2g}(\Z_{p(G)})$.
    This completes the proof of the first assertion.
The second assertion follows immediately from the first assertion, together with the well-known equality $\dim(\mathrm{Sp}_{2g}(\Z_{p(G)})) = 2g^{2} + g$.
\end{proof}

Next, we review a classification theorem of abelian Hodge-Tate representations.

\begin{dfn}
    We shall say that the MLF $k$ is an \textit{absolutely Galois} MLF if the extension $k/k^{(d=1)}$ is a Galois extension. 
\end{dfn}

\begin{dfn}
    Suppose that $k$ is an absolutely Galois MLF.
    Let $\pi \in \mathcal{O}_{k}$  be a uniformizer of $\mathcal{O}_{k}$ and $\sigma$ an element of $\Gal(k/k^{(d=1)})$.
    Then we shall write
    \begin{align*}
        \chi_{\pi,\sigma} \colon G_{k}^{\ab} \stackrel{\rec_{k}^{-1}}{\to} \widehat{k^{\times}} \twoheadrightarrow \mathcal{O}_{k}^{\times} \stackrel{\sigma}{\to} \mathcal{O}_{k}^{\times},
    \end{align*}
   where the second arrow is the projection determined by $\pi$.
\end{dfn}

\begin{thm}\label{classification of abelian Hodge-Tate rep}
    Suppose that $k$ is an absolutely Galois MLF.
    Let $\pi \in \mathcal{O}_{k}$ be a uniformizer of $\mathcal{O}_{k}$ and $\phi \colon G_{k}^{\mathrm{ab}} \to \mathcal{O}_{k}^{\times}$ a continuous homomorphism.
    Then the following two conditions are equivalent:
    \begin{enumerate}
        \item [(1)] The continuous representation obtained by forming the composite
        \begin{align*}
            G_{k} \twoheadrightarrow G_{k}^{\mathrm{ab}} \stackrel{\phi}{\to} \mathcal{O}_{k}^{\times} \hookrightarrow \mathrm{Aut}_{\Q_{p_{k}}}(k_{+})
        \end{align*}
        --- where the first arrow is the natural surjective continuous homomorphism, and the third arrow is the natural inclusion --- is Hodge-Tate.
        \item [(2)] There exist an integer $i_{\sigma}$ for each $\sigma \in \mathrm{Gal}(k/k^{(d=1)})$ and an open subgroup $J \subset I_{k}$ such that
        \begin{itemize}
            \item the restriction to $J$ of the composite of the natural surjective continuous homomorphism $G_{k} \twoheadrightarrow G_{k}^{\mathrm{ab}}$ and the given homomorphism $\phi \colon G_{k}^{\mathrm{ab}} \to \mathcal{O}_{k}^{\times}$
        \end{itemize}
        coincides with
        \begin{itemize}
        \item the restriction to $J$ of the composite of the natural surjective continuous homomorphism $G_{k} \twoheadrightarrow G_{k}^{\mathrm{ab}}$ and the homomorphism
            \begin{align*}
                \prod_{\sigma \in \mathrm{Gal}(k/k^{(d=1)})} \chi_{\pi, \sigma}^{i_{\sigma}} \colon G_{k}^{\mathrm{ab}} \to \mathcal{O}_{k}^{\times}.
            \end{align*}
        \end{itemize}
    \end{enumerate}
\end{thm}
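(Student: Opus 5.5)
This is the classical description of abelian Hodge-Tate characters, due to Tate and Serre; see Serre's \emph{Abelian $\ell$-adic representations}, Chapter III. I would reduce the statement to that classification.

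First I extend scalars. Write $K=k^{(d=1)}$ and $V=k_{+}$. Because $k/K$ is Galois, $k\otimes_{K}k\simeq\prod_{\sigma\in\Gal(k/K)}k$. The $K$-linear representation $V$ of $G_{k}$ through $\phi$ is $\mathcal{O}_{k}^{\times}$ acting by multiplication. After tensoring with $k$ over $K$, it decomposes as $V\otimes_{K}k\simeq\bigoplus_{\sigma}k(\sigma\circ\phi)$, a sum of one-dimensional $k$-linear characters $\sigma\circ\phi$. Being Hodge-Tate is insensitive to finite extension of the coefficient field and is compatible with direct sums. So condition (1) holds if and only if each character $\sigma\circ\phi\colon G_{k}\to k^{\times}$ is Hodge-Tate, i.e.\ if and only if $\phi$ itself is Hodge-Tate as a $k$-valued character.

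Next I apply Serre's theorem. A continuous character $\psi\colon G_{k}\to k^{\times}$ is Hodge-Tate if and only if it is locally algebraic: there are integers $n_{\tau}$, indexed by embeddings $\tau\colon k\hookrightarrow\overline{K}$, such that $\psi\circ\rec_{k}(u)=\prod_{\tau}\tau(u)^{-n_{\tau}}$ for $u$ in an open subgroup of $\mathcal{O}_{k}^{\times}$. This is Serre's Theorem III.A6 together with the Lubin-Tate computation of Hodge-Tate weights. Since $k/K$ is Galois, every embedding $\tau$ has image $k$ and equals some $\sigma\in\Gal(k/K)$. By the standard Lubin-Tate description, $\chi_{\pi,\sigma}\circ\rec_{k}$ is $u\mapsto\sigma(u)^{-1}$ on $\mathcal{O}_{k}^{\times}$ (up to the sign convention), and $\chi_{\pi,\sigma}$ is the Lubin-Tate character twisted by $\sigma$. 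Hence local algebraicity says exactly that $\phi\circ\rec_{k}$ agrees with $\prod_{\sigma}\chi_{\pi,\sigma}^{i_{\sigma}}\circ\rec_{k}$ on an open subgroup $U$ of $\mathcal{O}_{k}^{\times}$.

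Then I translate this to inertia. Local class field theory identifies the image of $I_{k}$ in $G_{k}^{\ab}$ with $\rec_{k}(\mathcal{O}_{k}^{\times})$. So agreement on $U$ is the same as agreement on the open subgroup $J\subset I_{k}$ given by the preimage of $\rec_{k}(U)$. This gives the implication (1)$\Rightarrow$(2).

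For the converse (2)$\Rightarrow$(1), the character $\prod_{\sigma}\chi_{\pi,\sigma}^{i_{\sigma}}$ is Hodge-Tate, because each $\chi_{\pi,\sigma}$ is a $\sigma$-conjugate of the Lubin-Tate character, which is Hodge-Tate with a single nonzero weight at $\sigma$. The quotient $\phi\cdot\prod_{\sigma}\chi_{\pi,\sigma}^{-i_{\sigma}}$ is trivial on an open subgroup of inertia, hence potentially unramified, hence Hodge-Tate of weight $0$. Since a product of Hodge-Tate characters is Hodge-Tate, (1) follows.

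The main obstacle is the direction (1)$\Rightarrow$(2). It rests on Serre's theorem that Hodge-Tate abelian characters are locally algebraic, which uses Tate-Sen theory and the structure of the torus $T_{k}$. I would cite that theorem rather than reprove it.
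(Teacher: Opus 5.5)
Your argument is correct. The paper gives no argument of its own here: it simply cites \cite{11}, Lemma 1.8. You instead derive the statement from the Tate--Serre theorem that a semisimple abelian $p$-adic representation is Hodge-Tate if and only if it is locally algebraic (\cite{12}, III, Appendix), which is presumably also the classical input behind Hoshi's lemma. The citation keeps the paper short, while your route makes the dependence on \cite{12} explicit and shows where each hypothesis enters. The Galois assumption is used exactly to ensure that every embedding of $k$ into $\overline{k}$ over $k^{(d=1)}$ has image $k$. Hence the homomorphisms $\mathrm{Res}_{k/k^{(d=1)}}\mathbb{G}_{m}\to\mathrm{Res}_{k/k^{(d=1)}}\mathbb{G}_{m}$ are precisely $u\mapsto\prod_{\sigma}\sigma(u)^{n_{\sigma}}$, and the $k$-valued characters $\chi_{\pi,\sigma}$ suffice. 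Your converse direction is exactly the inertial invariance recorded in Remark \ref{inertially independence}.

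Three small points could be tightened.
\begin{enumerate}
\item Serre's theorem assumes semisimplicity. This holds automatically here: the $\Q_{p_{k}}$-subalgebra of $\mathrm{End}(k_{+})$ generated by the image of $\phi$ is a subfield $F\subset k$, so the $G_{k}$-stable subspaces of $k_{+}$ are exactly the $F$-subspaces.
\item Your scalar-extension step is tautological. The map $\sigma^{-1}$ identifies each summand $k(\sigma\circ\phi)$ with $k_{+}$ as a $\Q_{p_{k}}$-representation, and $k_{+}$ is literally $\phi$ viewed as a $\Q_{p_{k}}$-representation.
\item With the paper's definition one has $\chi_{\pi,\sigma}(\rec_{k}(u))=\sigma(u)$ for $u\in\mathcal{O}_{k}^{\times}$ on the nose. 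So no Lubin--Tate computation or sign convention is needed in (1)$\Rightarrow$(2). Lubin--Tate theory enters only through the Hodge-Tate-ness of $\chi_{\pi,\sigma}$ in the converse, and that also follows from the easy direction of Serre's theorem.
\end{enumerate}
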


\begin{proof}
   This assertion is none other than \cite{11}, Lemma 1.8.
\end{proof}

\begin{dfn}
    Let $\pi \in \mathcal{O}_{k}$ be a uniformizer of $\mathcal{O}_{k}$.
    Then we write $\rho_{k,\pi} \colon G_{k} \to \mathrm{Aut}_{\Q_{p_{k}}}(k_{+})$ for the continuous $p_{k}$-adic representation of $G_{k}$ obtained by forming the composite
    \begin{align*}
        G_{k} \twoheadrightarrow G_{k}^{\mathrm{ab}} \stackrel{\chi_{\pi,\id_{k}}}{\to} \mathcal{O}_{k}^{\times} \hookrightarrow \mathrm{Aut}_{\Q_{p_{k}}}(k_{+}),
    \end{align*}
    where the first arrow is the natural surjective continuous homomorphism, and the third arrow is the natural inclusion.
\end{dfn}

\begin{rem}
    It follows from \cite{12}, $\rm I\hspace{-.15em}I\hspace{-.15em}I$, \S A.4, Proposition 4, that $\rho_{k,\pi}$ is isomorphic to the continuous $p_{k}$-adic representation determined by a Lubin-Tate character [i.e., a continuous character determined by a Lubin-Tate formal group over $\mathcal{O}_{k}$].
\end{rem}

\begin{rem}\label{inertially independence}
   It follows from \cite{12}, $\rm I\hspace{-.15em}I\hspace{-.15em}I$, \S A.1, Corollary 2, that the Hodge-Tate-ness of continuous $p_{k}$-adic representations of $G_{k}$ is independent of the choice of representatives of an inertial equivalence class [cf., e.g., \cite{8}, Definition 1.2, $\rm(\hspace{.18em}i\hspace{.18em})$].
   Moreover, one verifies easily that the inertial equivalence class of $\chi_{\pi, \id_{k}}$ is independent of the choice of a uniformizer of $\mathcal{O}_{k}$.
   Thus, the choice of a uniformizer of $\mathcal{O}_{k}$ is inessential for the discussion that follows. 
\end{rem}

The following proposition is one of the key ingredients of the present paper:

\begin{prop}\label{key lemma}
    Suppose that $k$ is an absolutely Galois MLF.
    Let $\pi \in \mathcal{O}_{k}$ be a uniformizer of $\mathcal{O}_{k}$ and $\alpha$ a continuous automorphism of $G_{k}$.
    If $\rho_{k,\pi} \circ \alpha$ is Hodge-Tate, then there exists an integer $i_{\sigma}$ for each $\sigma \in \mathrm{Gal}(k/k^{(d=1)})$ such that
    \begin{align*}
        \alpha_{+}=\sum_{\sigma \in \mathrm{Gal}(k/k^{(d=1)})} i_{\sigma} \cdot \sigma.
    \end{align*}
    In particular, if $\rho_{k,\pi} \circ \alpha$ is Hodge-Tate, then $\alpha_{+} \in \Q_{p_{k}}[\mathrm{Gal}(k/k^{(d=1)})] \cap \mathrm{Aut}_{\Q_{p_{k}}}(k_{+})  \subset \mathrm{End}_{\Q_{p_{k}}}(k_{+})$.
\end{prop}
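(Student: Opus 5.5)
Our plan is to apply Theorem \ref{classification of abelian Hodge-Tate rep} to $\phi \stackrel{\mathrm{def}}{=} \chi_{\pi,\id_{k}} \circ \alpha^{\ab}$, and then to compare the resulting identity on inertia with the definition of $\alpha_{+}$.

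First, since $\rho_{k,\pi}\circ\alpha$ factors as $G_{k} \twoheadrightarrow G_{k}^{\ab} \stackrel{\alpha^{\ab}}{\to} G_{k}^{\ab} \stackrel{\chi_{\pi,\id_{k}}}{\to} \mathcal{O}_{k}^{\times} \hookrightarrow \Aut_{\Q_{p_{k}}}(k_{+})$, it is of the form considered in condition (1) of Theorem \ref{classification of abelian Hodge-Tate rep}. Hodge-Tate-ness therefore gives integers $i_{\sigma}$ and an open subgroup $J\subset I_{k}$ such that
\[
\chi_{\pi,\id_{k}}\circ\alpha^{\ab} = \prod_{\sigma} \chi_{\pi,\sigma}^{i_{\sigma}}
\]
on the image of $J$ in $G_{k}^{\ab}$. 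Under $\rec_{k}$, the image of $I_{k}$ in $G_{k}^{\ab}$ is $\mathcal{O}_{k}^{\times}$, so the image of $J$ corresponds to an open subgroup $U\subset\mathcal{O}_{k}^{\times}$. On $\mathcal{O}_{k}^{\times}$, the map $\chi_{\pi,\sigma}\circ\rec_{k}$ is $u\mapsto\sigma(u)$; I would double-check the sign convention here, since if it is $u \mapsto \sigma(u)^{-1}$ one simply replaces each $i_{\sigma}$ by $-i_{\sigma}$. Moreover, by the construction of $\alpha^{\times}$, the automorphism $\alpha^{\ab}$ restricted to $\rec_{k}(\mathcal{O}_{k}^{\times})$ is $\alpha^{\times}|_{\mathcal{O}_{k}^{\times}}$, because $\mathcal{O}^{\times}(G)$ is the image of $I(G)$, which is characteristic. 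This yields, for $u \in U' \stackrel{\mathrm{def}}{=} U\cap(\alpha^{\times})^{-1}(U)$,
\[
\alpha^{\times}(u) = \prod_{\sigma}\sigma(u)^{i_{\sigma}}.
\]

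Next, we pass to $k_{+}$ via the logarithm. By \cite{1}, Proposition 3.11, (iv), $k_{+}(G)$ is built from $\mathcal{O}^{\times}(G)$, essentially as $\varinjlim$ of open subgroups of $\mathcal{O}^{\times}(G)$, or equivalently $\mathcal{O}^{\times}(G)\otimes_{\Z_{p}}\Q_{p}$ modulo torsion. Under the identification with $k_{+}$, this construction corresponds to $\log_{k}$, and $\alpha_{+}$ is the automorphism induced from $\alpha^{\times}$. Hence $\alpha_{+}\circ\log_{k} = \log_{k}\circ\alpha^{\times}$ on $\mathcal{O}_{k}^{\times}$. Applying $\log_{k}$ to the displayed identity, and using $\log_{k}\circ\sigma=\sigma\circ\log_{k}$, we get
\[
\alpha_{+}(\log_{k} u)=\sum_{\sigma} i_{\sigma}\,\sigma(\log_{k}u)
\]
for all $u\in U'$. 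Since $\log_{k}(U')$ is open in $k_{+}$, it spans $k_{+}$ over $\Q_{p_{k}}$. Both sides are $\Q_{p_{k}}$-linear, so they agree on all of $k_{+}$, giving $\alpha_{+} = \sum_{\sigma} i_{\sigma}\cdot\sigma$. The "in particular" assertion is then immediate, because $\alpha_{+}$ is an automorphism.

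The main obstacle is the compatibility step: verifying precisely that $\alpha_{+}$, defined by the reconstruction algorithm, intertwines with $\log_{k}$ and $\alpha^{\times}|_{\mathcal{O}_{k}^{\times}}$, and that $\alpha^{\ab}$ restricted to the inertia image is $\alpha^{\times}$. To settle this, I would unwind \cite{1}, Definition 3.10 and Proposition 3.11, (i), (iv), checking that every map used there is functorial in $\Aut(G)$ and that the identification $k_{+}(G_{k})\cong k_{+}$ is induced by $\log_{k}$.
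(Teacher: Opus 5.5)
Your proposal is correct and follows essentially the same route as the paper. You apply Theorem~\ref{classification of abelian Hodge-Tate rep} to $\chi_{\pi,\id_{k}}\circ\alpha^{\ab}$, translate via $\rec_{k}$ into an identity $\alpha^{\times}(u)=\prod_{\sigma}\sigma(u)^{i_{\sigma}}$ on an open subgroup of $\mathcal{O}_{k}^{\times}$, and transport it along $\log_{k}$ using $\alpha_{+}\circ\log_{k}=\log_{k}\circ\alpha^{\times}$. The only difference is that the paper cites \cite{13}, Lemma 4.1, for the final step, whereas you prove that step directly from $\Q_{p_{k}}$-linearity and the fact that $\log_{k}$ of an open subgroup spans $k_{+}$. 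Two minor points: with the paper's own definition of $\chi_{\pi,\sigma}$ there is no sign ambiguity, and shrinking to $U'$ is unnecessary because $\chi_{\pi,\id_{k}}$ is defined on all of $G_{k}^{\ab}$.
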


\begin{proof}
    Suppose that $\rho_{k,\pi} \circ \alpha$ is Hodge-Tate.
    Then it follows from Theorem \ref{classification of abelian Hodge-Tate rep} that there exist an integer $i_{\sigma}$ for each $\sigma \in \mathrm{Gal}(k/k^{(d=1)})$ and an open subgroup $J \subset I_{k}$ such that
        \begin{itemize}
            \item the restriction to $J$ of the composite of the natural surjective continuous homomorphism $G_{k} \twoheadrightarrow G_{k}^{\mathrm{ab}}$ and the homomorphism 
            \begin{align*}
                \chi_{\pi,\id_{k}} \circ \alpha^{\mathrm{ab}} \colon G_{k}^{\ab} \to \mathcal{O}_{k}^{\times}
            \end{align*}
        \end{itemize}
        coincides with
                \begin{itemize}
        \item the restriction to $J$ of the composite of the natural surjective continuous homomorphism $G_{k} \twoheadrightarrow G_{k}^{\mathrm{ab}}$ and the homomorphism
            \begin{align*}
                \prod_{\sigma \in \mathrm{Gal}(k/k^{(d=1)})} \chi_{\pi, \sigma}^{i_{\sigma}} \colon G_{k}^{\mathrm{ab}} \to \mathcal{O}_{k}^{\times}.
            \end{align*}
        \end{itemize}
        Thus, it follows from the definition of $\alpha^{\times}$ that there exist an integer $i_{\sigma}$ for each $\sigma \in \mathrm{Gal}(k/k^{(d=1)})$ and an open subgroup $U \subset \mathcal{O}_{k}^{\times}$ such that \begin{itemize}
            \item the restriction to $U$ of the homomorphism 
            \begin{align*}
                \alpha^{\times} \colon \mathcal{O}_{k}^{\times} \to \mathcal{O}_{k}^{\times}
            \end{align*}
        \end{itemize}
        coincides with
                \begin{itemize}
        \item the restriction to $U$ of the homomorphism
            \begin{align*}
                \prod_{\sigma \in \mathrm{Gal}(k/k^{(d=1)})} \sigma_{i_{\sigma}} \colon \mathcal{O}_{k}^{\times} \to \mathcal{O}_{k}^{\times},
            \end{align*}
        \end{itemize}
        where, we write $\sigma_{i_{\sigma}}$ for the endomorphism of $\mathcal{O}_{k}^{\times}$ defined by $x \mapsto \sigma(x)^{i_{\sigma}}$. 
        Let us recall that it follows from the definition of $\alpha_{+}$ and \cite{1}, Proposition 3.11, $\rm(i\hspace{-.08em}v\hspace{-.06em})$, that the following diagram commutes:
\[
\begin{tikzcd}
 k_{+} \arrow[r, "\alpha_{+}"] & k_{+} \\
   \mathcal{O}_{k}^{\times} \arrow[r, "\alpha^{\times}"'] \arrow[u, "\log_{k}"] &  \mathcal{O}_{k}^{\times} \arrow[u, "\log_{k}"'].
\end{tikzcd}
\]
Thus, it follows from \cite{13}, Lemma 4.1, together with the commutativity of the diagram above, that for each $\sigma \in \mathrm{Gal}(k/k^{(d=1)})$ there exists an integer $i_{\sigma}$ such that
\begin{align*}
    \alpha_{+}=\sum_{\sigma \in \mathrm{Gal}(k/k^{(d=1)})} i_{\sigma} \cdot \sigma.
\end{align*}
This completes the proof of Proposition \ref{key lemma}.
\end{proof}

With the above preparations, we now prove the main theorem of the present paper.

\begin{thm}\label{main theorem}
    Suppose that $k$ is an absolutely Galois MLF, that $p_k$ is odd, and that $d_k$ is odd and greater than one.
    Then the continuous $p_{k}$-adic representation $\rho_{k,\pi}$ is not Aut-intrinsically Hodge-Tate [cf. Definition in Introduction].
\end{thm}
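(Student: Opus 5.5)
We argue by contradiction. Suppose that $\rho_{k,\pi}$ is Aut-intrinsically Hodge-Tate, i.e., that $\rho_{k,\pi}\circ\alpha$ is Hodge-Tate for every continuous automorphism $\alpha$ of $G_{k}$.

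\textbf{Step 1: the image of $\Phi$ is contained in a finitely generated abelian set.} Applying Proposition \ref{key lemma} to every $\alpha$ shows that each $\alpha_{+}$ is an integral combination of the elements of $\mathrm{Gal}(k/k^{(d=1)})$. So the image of
\[
\Phi \colon \Out(G_{k}) \to \GL_{d_{k}}(\Q_{p_{k}})
\]
lies in $\Z[\mathrm{Gal}(k/k^{(d=1)})]$, viewed inside $\mathrm{End}_{\Q_{p_{k}}}(k_{+})$ through the basis $y_{1},\ldots,y_{d_{k}}$. This set is a finitely generated free $\Z$-module, hence countable.

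\textbf{Step 2: compare with Lemma \ref{dimention of the image of Phi}.} That lemma says $\mathrm{Im}(\Phi)$ contains $\mathrm{Sp}_{2g}(\Z_{p_{k}})$ with $g=(d_{k}-1)/2\geq 1$. This group is uncountable: already $\mathrm{Sp}_{2}(\Z_{p_k})=\mathrm{SL}_{2}(\Z_{p_k})$ is uncountable. An uncountable set cannot lie in a countable one, so we have a contradiction.

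If one prefers to stay closer to the paper's Lie-theoretic framework, use the $\Q_{p_k}$-span instead. The image of $\Phi$ lies in $\Q_{p_{k}}[\mathrm{Gal}(k/k^{(d=1)})]\cap \Aut(k_+)$. Since $k/k^{(d=1)}$ is Galois, the normal basis theorem identifies $\Q_{p_k}[\mathrm{Gal}]$ acting on $k_+$ with the group algebra, which has dimension $d_k$. Hence, by Lemma \ref{p-adic Lie group structure}, $\dim(\mathrm{Im}(\Phi))\le d_{k}=2g+1$. Lemma \ref{dimention of the image of Phi} gives $\dim(\mathrm{Im}(\Phi))\ge 2g^{2}+g$, and $2g^{2}+g>2g+1$ only when $g\ge 2$. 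An alternative argument works for all $g\geq1$: the group algebra is commutative, but $\mathrm{Sp}_{2g}(\Z_{p_k})$ is nonabelian for $g\ge1$. So this route also yields a contradiction, and I would present the commutativity argument because it is uniform in $g$.

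\textbf{Main obstacle.} The delicate point is making the identifications compatible. Proposition \ref{key lemma} is phrased for $\alpha_{+}$ on $k_{+}$ itself. Lemma \ref{dimention of the image of Phi} is phrased for $k_{+}(G)$ with the basis $y_{i}$. Both use the same isomorphism $k_{+}(G_k)\cong k_{+}$ of \cite{1}, Proposition 3.11, $\rm(i\hspace{-.08em}v\hspace{-.06em})$. I need to check that $\Phi$ composed with this identification really sends $\alpha$ to the $\alpha_{+}$ appearing in Proposition \ref{key lemma}, which is how the paper's notational conventions define it. I also need that $\mathrm{Sp}_{2g}(\Z_{p_k})$ sits inside the image as a genuinely noncommutative subgroup of $\Aut_{\Q_{p_k}}(k_+)$, which is clear from the block embedding. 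Once these are in place, the contradiction shows that $\rho_{k,\pi}$ is not Aut-intrinsically Hodge-Tate.
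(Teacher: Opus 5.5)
Your Steps 1--2 are a correct proof, and they take a genuinely different route from the paper's.

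\textbf{Comparison with the paper.} The paper immediately weakens Proposition \ref{key lemma} to $\alpha_{+}\in\Q_{p_k}[\mathrm{Gal}(k/k^{(d=1)})]$ and places $\mathrm{Im}(\Phi)$ inside the $p_k$-adic Lie group $Z$, the image of $\Q_{p_k}[\mathrm{Gal}(k/k^{(d=1)})]^{\times}$. It then splits into two cases:
\begin{itemize}
\item For $g\ge 2$ it compares $\dim(Z)\le 2g+1$ (from \cite{7}, Lemma 2.14) with $\dim(\mathrm{Im}(\Phi))\ge 2g^{2}+g$.
\item For $g=1$ it uses that a group of order $3$ is abelian. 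So $Z$ is abelian, while $\mathrm{Sp}_{2}(\Z_{p_k})$ is not.
\end{itemize}
You instead keep the integrality of the $i_\sigma$, which comes from the discreteness of Hodge--Tate weights. This puts $\mathrm{Im}(\Phi)$ in the countable image of $\Z[\mathrm{Gal}(k/k^{(d=1)})]$, which cannot contain the uncountable group $\mathrm{Sp}_{2g}(\Z_{p_k})$.

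Your route is shorter and uniform in $g$. It needs no $p_k$-adic Lie theory: neither Lemma \ref{p-adic Lie group structure} nor the dimension bound from \cite{7} is used. In fact, $\mathrm{Im}(\Phi)$ is compact, and a countable compact Hausdorff group is finite by Baire. So the infinitude of $\mathrm{Sp}_{2g}(\Z)\subset\mathrm{Im}(\Phi)$ would already suffice, and even the closure step in Lemma \ref{dimention of the image of Phi} could be skipped.

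The paper's argument, by contrast, uses only the weaker ``in particular'' clause of Proposition \ref{key lemma}. It would therefore survive if one only knew $\alpha_{+}$ to be a $\Q_{p_k}$-linear combination of the elements of $\mathrm{Gal}(k/k^{(d=1)})$.

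\textbf{The commutativity argument is wrong.} You say you would present the ``uniform'' commutativity argument, but it fails. The algebra $\Q_{p_k}[\mathrm{Gal}(k/k^{(d=1)})]$, and hence $Z$, is commutative only when $\mathrm{Gal}(k/k^{(d=1)})$ is abelian. Odd degree does not force this. For example, take $p_k=11$ and $k=\Q_{11}(\mu_{7},11^{1/7})$. Since $11\equiv 4 \bmod 7$ has order $3$, the extension $k/\Q_{11}$ is Galois of degree $21$ with nonabelian group $\Z/7\Z\rtimes\Z/3\Z$. This is exactly why the paper uses commutativity only when $d_k=3$, where the Galois group has prime order and is cyclic, and uses the dimension count when $g\ge 2$. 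Present your countability argument, or the paper's case split, rather than the commutativity one.
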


\begin{proof}
   We shall write $Z$ for the image of $\Q_{p_{k}}[\mathrm{Gal}(k/k^{(d=1)})]^{\times}$ in $\mathrm{GL}_{d_{k}}(\Q_{p_{k}})$ via the isomorphism $\Aut_{\Q_{p_{k}}}(k_{+}) \stackrel{\sim}{\longrightarrow} \mathrm{GL}_{d_{k}}(\Q_{p_{k}})$ determined by the basis $y_{1}, \ldots, y_{d_{k}}$ of $k_{+}$.

   We prove Theorem \ref{main theorem} by contradiction.
   Suppose that the continuous $p_k$-adic representation $\rho_{k,\pi}$ is Aut-intrinsically Hodge-Tate.
   Then it follows from Proposition \ref{key lemma} that, for any $\alpha \in \Out(G_k)$, it holds that $\alpha_{+}, \alpha_{+}^{-1} \in \Q_{p_k}[\Gal(k/k^{(d=1)})]$.
   In particular, the image of the continuous group homomorphism $\Phi$ [cf. Lemma \ref{closed map}] is contained in $Z$.
   
   We first consider the case where $g \stackrel{\mathrm{def}}{=} \frac{d_{k} - 1}{2} \geq 2$.
    In light of the fact that $Z \subset \mathrm{GL}_{d_{k}}(\Q_{p_{k}})$ is a closed subgroup, we endow $Z$ with the natural $p_{k}$-adic Lie group structure [cf. \cite{17}, Theorem 9.6].
    Then it follows from \cite{7}, Lemma 2.14, (2), (3), that $\dim (Z) \leq d_{k}=2g+1$.
    Thus, it follows from the [easily verified] inequality $2g^{2}+g > 2g+1$ and Lemma \ref{dimention of the image of Phi} that there exists an automorphism $\alpha$ of $G_{k}$ such that $\Phi(\alpha) \notin Z$.
    This contradicts the above observation that the image of $\Phi$ is contained in $Z$.
This completes the proof of Theorem \ref{main theorem} in the case where $g \geq 2$.

    Finally, we consider the case where $g = 1$ [i.e., $d_{k} = 3$].
    In this case, since [it is immediate that] the Galois group $\Gal(k/k^{(d=1)})$ is abelian, it follows that the group $Z$ is abelian.
   On the other hand, one verifies easily that the group $\mathrm{Sp}_{2}(\Z_{p_k})$ is not abelian.
    Thus, it follows from Lemma \ref{dimention of the image of Phi} that the $\mathrm{Im}(\Phi)$ is not abelian.
    In particular, there exists an automorphism $\alpha$ of $G_{k}$ such that $\Phi(\alpha) \notin Z$.
    This contradicts the above observation that the image of $\Phi$ is contained in $Z$.
    This completes the proof of Theorem \ref{main theorem} in the case where $g = 1$, hence also of Theorem \ref{main theorem}.
\end{proof}

\begin{rem}
    Let $p$ be a prime number and $\overline{\Q_{p}}$ an algebraic closure of $\Q_{p}$.
    Then it is well-known that the Lubin-Tate character over $\Z_{p}$ determined by the uniformizer $p$ coincides with the $p$-adic cyclotomic character.
    Thus, the continuous $p$-adic representations $G_{\Q_{p}} \stackrel{\mathrm{def}}{=} \Gal(\overline{\Q_{p}}/\Q_{p}) \to \Q_{p}^{\times}$ determined by the Lubin-Tate characters over $\Z_{p}$ [i.e., with respect to arbitrary choices of uniformizers of $\Z_{p}$] are Aut-intrinsically Hodge-Tate [cf. Remark \ref{inertially independence}; \cite{13}, Proposition 1.1].
   This reflects a fundamental distinction between $\Q_p$ and its nontrivial finite Galois extensions from the point of view of anabelian geometry in the case where $p$ is odd.
\end{rem}

\begin{rem}
    It is straightforward to see that a similar proof strategy applied in the proof of Theorem \ref{main theorem} may also be applied in the case where $k$ is an absolutely Galois MLF with even $d_k$.
    We leave the routine details to the interested reader.
\end{rem}

\begin{rem}
    In \cite{7}, the author of the present paper proved the following assertion:
    \begin{quote}
        Suppose that $p_{k}$ is odd, that $d_{k}$ is even, and that $k$ is an absolutely Galois MLF. Let $\varphi$ be the automorphism of $G_{k}$ defined by the following equalities [cf.\ Theorem \ref{generator and relation of group of MLF-type}]:
    \begin{align*}
    \varphi(\sigma)=\sigma,\ \varphi(\tau)=\tau,\ \varphi(x_{2})=x_{2}x_{1},\ \varphi(x_{i})=x_{i}\ (i \neq 2).
    \end{align*}
    Then the continuous $p_{k}$-adic representation $\rho_{k,\pi} \circ \varphi$ is not Hodge-Tate.
    \end{quote}
    On the other hand, we cannot obtain an explicit automorphism of $G_{k}$ that violates the Aut-intrinsic Hodge-Tate-ness of Lubin-Tate characters in the above proof of Theorem \ref{main theorem}.
\end{rem}

\section*{Acknowledgments}

I would like to express my sincere gratitude to Professor Yuichiro Hoshi for his numerous insightful discussions and warm encouragement.
I am also grateful to Reiya Tachihara for carefully reading my drafts and providing invaluable comments.
I am especially thankful to Professor Yuichiro Hoshi for his detailed feedback on the drafts.

\end{document}